\documentclass[10pt,a4paper]{article}
\usepackage[numbers,sort&compress]{natbib}
\usepackage[T1]{fontenc}
\usepackage[utf8]{inputenc}
\usepackage{authblk}
\usepackage{amsmath,amssymb,amsthm,esint,bm}
\usepackage{mathrsfs}
\usepackage{bookmark}
\usepackage{amsmath}
\usepackage{amsmath, amsthm}
\usepackage{enumitem}
\allowdisplaybreaks[4]

\newtheorem{definition}{Definition}[section]
\newtheorem{theorem}[definition]{Theorem}
\newtheorem{lemma}[definition]{Lemma}
\newtheorem{proposition}[definition]{Proposition}

\theoremstyle{remark}
\newtheorem{remark}[definition]{Remark}
\numberwithin{equation}{section}

\newcommand{\abs}[1]{\lvert#1\rvert}
\newcommand{\Abs}[1]{\left\lvert#1\right\rvert}

\newcommand{\rn}{{\mathbb{R}^d}}

\allowdisplaybreaks[4]

\title{Borderline gradient continuity for degenerate/singular fully nonlinear elliptic equations with Hamiltonian terms}

\author[a]{Wentao Huo}
\author[b]{Lingwei Ma}
\author[b]{Zhenqiu Zhang\thanks{Corresponding author.}}
\affil[a]{School of Mathematical Sciences, Nankai University, Tianjin 300071, P.R. China}
\affil[b]{School of Mathematical Sciences and LPMC, Nankai University, Tianjin 300071, P.R. China}
\date{\today}

\usepackage{hyperref}
\begin{document}
\maketitle
\footnotetext[1]{E-mail: huowentaoouc@163.com (W. Huo), malingwei@nankai.edu.cn (L. Ma), zqzhang@nankai.edu.cn (Z. Zhang).}

\begin{abstract}
This paper focuses on a class of fully nonlinear elliptic equations with general double phase degeneracy/singularity law and Hamiltonian terms of the form
\begin{equation*}
	\Phi(|Du|,x)F(D^2 u, x)+H(Du,x) =f(x)
	\quad  \text{in} \quad B_{1},
\end{equation*}
where $\Phi$ takes one of two typical forms:
 $$\Phi(|Du|,x)=\sigma_{1}(|Du|)+a(x)\sigma_{2}(|Du|)\quad {\rm or}\quad \Phi(|Du|,x)=\frac{\sigma_{1}(|Du|)}{|Du|}+a(x)\frac{\sigma_{2}(|Du|)}{|Du|}.$$
Under suitable assumptions on the operator $F$, Hamiltonian term $H$, source term $f$ and modulating coefficient $a$,  we establish $C^{1}$ regularity for viscosity solutions, provided that $\sigma_{1},\sigma_{2}$ are moduli of continuity and their inverses are Dini continuous. Our argument is based on a tangential analysis via approximating hyperplanes combined with a new recursive renormalization algorithm adapted to the present framework. It is noteworthy that our results are new even for the case $a(x)\equiv 0$.

Mathematics Subject classification (2020):  35B65; 35J60; 35J70; 35J75; 35D40.

Keywords: Gradient regularity; degenerate/singular fully nonlinear equations; viscosity solution; Hamiltonian terms; Dini continuity.

\end{abstract}


\section{Introduction}\label{section1}
This paper is devoted to studying the regularity of viscosity solutions to the following degenerate or singular fully nonlinear elliptic equations with Hamiltonian terms
\begin{equation}\label{4doublephasezhufangcheng}
\Phi(|Du|,x)F(D^2 u, x)+H(Du,x) =f(x) \quad  \text{in} \quad B_{1},
\end{equation}
where $B_{1}:=B_{1}(0)$ is a unit ball in the Euclidean space $\mathbb{R}^d$, $F:S^{d}\times B_{1}\rightarrow \mathbb{R}$ is a second order fully nonlinear uniformly elliptic operator, the function $\Phi$ features singularity or degeneracy as the gradient vanishes, the Hamiltonian term $H$ fulfills some appropriate conditions, and source term $f\in C({B_{1}}) \cap L^{\infty}(B_{1})$.

A natural starting point is to consider $\Phi$ of the double phase form
\begin{equation}\label{phiform1}
	\Phi(|Du|,x):=\sigma_{1}(|Du|)+a(x)\sigma_{2}(|Du|)
\end{equation}
with degeneracy rates $\sigma_{1},\sigma_{2}$ and a modulating function $0\leq a(\cdot)\in C(B_{1})$.
Our primary goal is to examine minimal conditions on $\sigma_{1},\sigma_{2}$ to guarantee solutions retain $C^{1}$-differentiability. As is common in many branches of mathematical analysis, $C^{1}$ estimate is more challenging, since it represents a critical borderline regularity.

One of the motivations for studying this class of equations arises from certain variational integrals of the calculus of variations with nonstandard growth satisfying a sort of double-phase structure \cite{Marcellini1989,Mingione2015,Mingione201522,Zhikov1993,Zhikov1995}. In addition, the study of equations like \eqref{4doublephasezhufangcheng} arises naturally in the optimal stochastic control problems \cite{Fleming,Leoni,Birindelli2019,Lions1989} and as a generalization of Hamilton–Jacobi equations \cite{Armstrong2015,Pimentel2023}.

The theory of equations of the form \eqref{4doublephasezhufangcheng} has experienced an impressive progress through the past decades, starting with the pioneering works \cite{Birindelli2006,Birindelli2007CPAA,Birindelli2004}, including aspects such as existence and uniqueness, comparison principle, Harnack inequalities and H\"{o}lder regularity; further significant results are contained in \cite{Imbert2011JDE,Junges2010,Silvestre2016JEMS}. Afterwards, $C^{1,\alpha}$ regularity of viscosity solutions to degenerate or singular elliptic equations has been the subject of intensive study in regularity theory, with one of the most widely studied prototypes
given by 
\begin{equation*}
	\abs{Du}^{p}F(D^2 u)+H(Du,x)=f(x) \quad  \text{in} \quad B_{1}
\end{equation*}
with $p>-1$, see \cite{Imbert1,Ricarte,Birindelli2014ESAIM,Birindelli2015,B-Demengel2016,B-Demengel2019,Andrade,Bronzi2020}. It is noteworthy that
such regularity theory for viscosity solutions plays a crucial role the study of various free boundary problems of obstacle type \cite{SilvaDCDS,SilvaRM}, of one-phase Bernoulli type \cite{dasilva2023}, and of singular perturbation type \cite{Bezerra2023,Bezerra2025}, just to cite a few. 

A generalisation of the degeneracy law $\xi\rightarrow |\xi|^{p}$ appeared in \cite{Andrade2022}. In that paper, Andrade et al. considered viscosity solutions to degenerate fully nonlinear equations of the form
\begin{equation*}
	\sigma(\abs{Du})F(D^2 u)=f(x) \quad  \text{in} \quad B_{1}
\end{equation*}
under the assumptions that  $\sigma:[0,\infty)\rightarrow [0,\infty)$ is a general modulus of continuity whose inverse $\sigma^{-1}$ is Dini continuous, and $f\in L^{\infty}(B_1)$. In this setting, they proved that solutions are locally of class $C^{1}$.

In the context of fully nonlinear elliptic equations with nonhomogeneous degeneracy, the pioneering work of De Filippis \cite{Fili} considered equations of the form
\begin{equation*}
	\bigg(|Du|^{p}+a(x)|Du|^{q}\bigg) F(D^2 u) =f(x)
	\quad  \text{in} \quad B_{1}
\end{equation*}
for $0\leq p\leq q$, $0\leq a(\cdot)\in C(B_{1})$ and $f\in C(B_{1})\cap L^{\infty}(B_{1})$. Under such conditions, the local $C^{1,\alpha}$ regularity of viscosity solutions was established.  
Subsequently, da Silva and Ricarte \cite{Silva2020}
improved De Filippis’ result and addressed a variety of applications in nonlinear elliptic models and related free boundary problems. 
Furthermore, regarding the sharp gradient H\"{o}lder regularity for fully nonlinear elliptic equations with unbalanced variable degeneracy, see \cite{Fang,Bezerra2023}.

For very general classes of degeneracy/singularity, we refer to recent work \cite{Baasandorj,Huo2026} in the context of fully nonlinear elliptic equations. Specifically, under the key assumptions that the function $\Phi:[0,\infty)\times\Omega\rightarrow [0,\infty)$ is a continuous map and that there exist constants $s(\Phi)\geq i(\Phi)>-1$ such that the map $t\mapsto\frac{\Phi(t,x)}{t^{i(\Phi)}}$ is almost non-decreasing and the map $t\mapsto \frac{\Phi(t,x)}{t^{s(\Phi)}}$ is almost non-increasing in $(0,\infty)$, optimal local $C^{1,\alpha}$ regularity for viscosity solutions of \eqref{4doublephasezhufangcheng} has been obtained in \cite{Baasandorj} (without Hamiltonian term) and in \cite{Huo2026} (with Hamiltonian term).

Inspired by the previous literature, in this paper, we naturally investigate $C^{1}$
regularity of viscosity solutions to \eqref{4doublephasezhufangcheng} with $\Phi(|Du|,x)$ given by \eqref{phiform1}, provided that degeneracy laws $\sigma_{1},\sigma_{2}$ fulfill minimal assumption \eqref{A3} below. A by-product of our argument is an explicit modulus of continuity for $Du$. To the best of our knowledge, there are no results on the borderline gradient continuity of solutions to varying coefficient fully nonlinear elliptic equations simultaneously involving double-phase type degeneracy laws and Hamiltonian terms.

Before stating our main results, we start by presenting the structural assumptions on \eqref{4doublephasezhufangcheng} to be employed throughout this work:
\begin{enumerate}[label=(\text{\bf H}\arabic{enumi}),ref=\textbf{H}\arabic{enumi}]
	\item \label{A1} The fully nonlinear operator $F:S^{d}\times B_{1}\rightarrow \mathbb{R}$ is uniformly $(\lambda,\Lambda)$-elliptic, i.e., there exist constants $0<\lambda\leq \Lambda$ such that
	$$\lambda\|N\|\leq F(M+N,x)-F(M,x)\leq \Lambda\|N\|$$
	for any $M,N\in S^{d}$ with $N\geq 0$ and $x\in B_{1}$. For normalization purposes, we assume that $F(0,x) = 0$ for all $x\in B_{1}$.
	\item \label{A2}
	There exist constants $C>0$ and $\theta\in(0,1)$ such that
	\begin{equation*}
		{\rm osc}_{F}(x,y):=\sup\limits_{M\in S^{d}\setminus \{0\}}\frac{\Abs{F(M,x)-F(M,y)}}{\|M\|}\leq C|x-y|^{\theta}
	\end{equation*}
	for all $x,y\in B_{1}$. For convenience,
	we denote ${\rm osc}_{F}(x):={\rm osc}_{F}(x,0)$ and 
	$$C_{F}:=\inf\left\{C>0:{\rm osc}_{F}(x,y)\leq C|x-y|^{\theta},\;\forall\, x,y\in B_{1}\right\}.$$
	\item \label{A3} $\sigma_{i}:[0,\infty)\rightarrow [0,\infty)$, $i=1,2$, are moduli of continuity satisfying
	\begin{equation}\label{guanxinormalization}
		\sigma_{1}(t)\geq\sigma_{2}(t) \quad {\rm for\;all\;} t\in[0,1],
	\end{equation}
	and $\sigma_{2}$ admits an inverse $\sigma_{2}^{-1}$ that is Dini continuous.
	\item \label{A4} The modulating coefficient $a(\cdot)$ satisfies $0\leq a(\cdot)\in C(B_{1})$.
	\item \label{A5} The Hamiltonian term $H:\mathbb{R}^{d}\times B_{1}\rightarrow \mathbb{R}$ is continuous, and there is a constant $\mathcal{M}>0$ such that
	\begin{equation*}
		\Abs{H(\xi,x)}\leq \mathcal{M}\left(1+\sigma_{1}(|\xi|)\right)
	\end{equation*}
	for every $\xi\in \mathbb{R}^{d}$ and $x\in B_{1}$.
	\item \label{A6} The source term $f$ belongs to $C({B_{1}}) \cap L^{\infty}(B_{1})$.
\end{enumerate}

Recall that a function $\omega:\mathbb{R}_{0}^{+}:=(0,\infty)\rightarrow \mathbb{R}_{0}^{+}$ is called a modulus of continuity if $\omega$ is monotone increasing and $\lim\limits_{t\rightarrow 0}\omega(t)=0$. We say that a modulus of continuity $\omega$ is Dini continuous, if 
	$$\int_{0}^{1}\frac{\omega(t)}{t}\text{d}t<\infty.$$

Note that $\sigma_{1}^{-1}$ is automatically Dini continuous, due to the Dini continuity of $\sigma_{2}^{-1}$, \eqref{guanxinormalization} and the monotonicity of $\sigma_{1},\sigma_{2}$.
In addition, Dini condition can also be characterized in terms of the summability of $\omega$ evaluated along geometric sequences, i.e., $\omega$ satisfies the Dini condition if and only if
	\begin{equation*}
		\sum_{k=1}^{\infty} \omega(\vartheta^{k}) < \infty
	\end{equation*}
	for every $\vartheta\in (0,1) $. For more details about this equivalence, we refer the readers to \cite[Definition 1]{Andrade2022}. 

We now state the first main result concerning the borderline regularity of solutions
to equation \eqref{4doublephasezhufangcheng} in the degenerate case.
\begin{theorem}\label{main}
	 Let $u\in C(B_{1})$ be a viscosity solution of \eqref{4doublephasezhufangcheng} with \eqref{phiform1} under assumptions \eqref{A1}-\eqref{A6}. Then $ u \in C^{1}_{\rm loc}(B_{1})$. Moreover, there exists a modulus of continuity $ \omega : \mathbb{R}_{0}^{+} \rightarrow  \mathbb{R}_{0}^{+} $ depending only on $d,\lambda,\Lambda,C_{F},\theta,\sigma_{1}, \sigma_{2},\|u\|_{L^{\infty}(B_{1})},\mathcal{M}$ and $\|f\|_{L^{\infty}(B_{1})}$ such that
	 \begin{equation*}
	 	|Du(x)-Du(y)| \leq \omega (|x-y|)
	 \end{equation*}
	 for every $ x,y \in B_{1/4} $. 
\end{theorem}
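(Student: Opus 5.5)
The plan follows the tangential-analysis scheme of Andrade et al.\ \cite{Andrade2022}, adapted to the double-phase degeneracy and the Hamiltonian term, and run through an iterative renormalization. We first normalize. After translation and scaling $u\mapsto u(x_0+rx)/K$ for fixed $x_0\in B_{1/4}$, we may assume $\|u\|_{L^\infty(B_1)}\le 1$ and that $\|f\|_\infty$, $C_F$ and $\mathcal M$ are below a small $\varepsilon$ to be chosen. The degeneracy laws transform as $\sigma_i(t)\mapsto \sigma_i(Kt/r)$, up to constant factors absorbed into $F$.

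\textbf{Step 1 (compactness).} We show that viscosity solutions of
\[
(\sigma_1(|\xi+Du|)+a(x)\sigma_2(|\xi+Du|))F(D^2u,x)+H(\xi+Du,x)=f
\]
with $\|u\|_\infty\le1$ are uniformly Hölder continuous, uniformly in the vector $\xi\in\mathbb R^d$. For $|Du|$ large the operator is uniformly elliptic with a bounded right-hand side after dividing by $\Phi\ge\sigma_1(\cdot)$, and the bound $|H|\le\mathcal M(1+\sigma_1)$ keeps the quotient bounded. For $|Du|$ small the equation is ``cut off''; we argue as in Imbert–Silvestre \cite{Imbert1} using $\sigma_1\ge\sigma_2$ and the smallness regime.

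\textbf{Step 2 (approximation lemma).} For every $\delta>0$ there is $\varepsilon>0$ such that, if $\|f\|_\infty+\mathcal M+{\rm osc}_F\le\varepsilon$, then there exists $h$ solving $\bar F(D^2h)=0$ (a frozen, uniformly elliptic, constant-coefficient operator) with $\|u-h\|_{L^\infty(B_{1/2})}\le\delta$. We argue by contradiction with sequences. By Step 1 we get a uniform limit, and the limiting equation is
\[
\Phi_\infty(|\xi_\infty+Dh|,x)\,\bar F(D^2h)=0.
\]
We then remove the degeneracy: if $\xi_\infty$ is large the factor is positive, and otherwise we use the standard argument that $\Phi\,\bar F=0$ implies $\bar F(D^2h)=0$ (testing with perturbed test functions, since $\Phi=0$ only where $\xi+D\varphi=0$). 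The limit $h$ is $C^{1,\bar\alpha}$ by Caffarelli's estimates.

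\textbf{Step 3 (geometric iteration).} Here lies the key difficulty: unlike the power case, there is no scaling invariance. Fix $\rho$ small so that $C\rho^{1+\bar\alpha}\le\rho/2$, which gives an affine function $\ell_1$ with $\sup_{B_\rho}|u-\ell_1|\le\rho$. We then build affine functions $\ell_k$ recursively, using a renormalized function
\[
u_k(x)=\frac{(u-\ell_k)(\rho^kx)}{\rho^{k}\tau_k},
\]
where the scales $\tau_k$ are chosen via $\sigma_2^{-1}$. Concretely, $\tau_{k+1}=\max\{\tau_k/2,\ \sigma_2^{-1}(\text{something of order }\rho^k\|f\|)\}$, following \cite{Andrade2022}. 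The choice is designed so that $u_k$ solves an equation of the same class with modified laws $\tilde\sigma_i(t)=\sigma_i(\tau_k t)/\sigma_2(\tau_k)$ and a small right-hand side. The Hamiltonian rescales to $\rho^k H/(\tau_k\sigma_2(\tau_k))$, which must remain controlled. We use the bound $\sigma_1\ge\sigma_2$ and the term $\mathcal M(1+\sigma_1(|\xi|))$, where the $\sigma_1$ part is divided by $\Phi\ge\sigma_1$ and the constant part is absorbed exactly like $f$. We also need the normalization $\tilde\sigma_1\ge\tilde\sigma_2$ to persist, and this is where the hypothesis \eqref{guanxinormalization} enters. We expect this renormalization, keeping every rescaled equation inside the class where the constants of Steps 1--2 are uniform, to be the main obstacle. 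To handle it we first try to make the compactness statement uniform over all pairs of moduli with $\sigma_1\ge\sigma_2$, $\sigma_2(1)\ge c$.

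\textbf{Step 4 (conclusion).} The iteration yields
\[
\sup_{B_{\rho^k}}|u-\ell_k|\le\rho^k\tau_k,\qquad |D\ell_{k+1}-D\ell_k|\le C\tau_k .
\]
Dini continuity of $\sigma_2^{-1}$, in the geometric-series form recalled above, gives $\sum_k\tau_k<\infty$. Hence $D\ell_k\to Du(x_0)$ with rate $\sum_{j\ge k}\tau_j$, and the standard argument converts this into the modulus
\[
\omega(r)\approx \sum_{j\ge \log_\rho r}\tau_j + \text{(tail terms)},
\]
which is uniform for $x_0\in B_{1/4}$.
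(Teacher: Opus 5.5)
\textbf{Gap: the rescaled degeneracy laws must form a non-collapsing family.}

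Your architecture is broadly the paper's: normalization, compactness uniform in $\xi$, approximation by $F$-harmonic functions, and iterated affine approximation with Dini-summable scales. Your bookkeeping of the Hamiltonian term is also correct. The gap is at the step you flag. Each renormalized equation carries a different degeneracy law, so the approximation lemma must hold with a single $\delta$ for the whole family of laws the iteration produces. That requires the family to be \emph{non-collapsing}: $\Phi_j(b_j)\to 0$ must force $b_j\to0$.

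Your Step 2 uses this silently. At a touching point with $0<|b+\xi_\infty|<1$ you need $f_j/\Phi_j(|b+\xi_j|,x_j)\to 0$. Likewise, passing from $\Phi_\infty(|\xi_\infty+Dh|)\bar F(D^2h)=0$ to $\bar F(D^2h)=0$ needs the limit law to vanish only at $0$.

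Your proposed remedy is uniformity over all pairs with $\sigma_1\ge\sigma_2$ and $\sigma_2(1)\ge c$. This works for the H\"older estimate of Step 1, since the paper's constants there depend only on $\sigma_1(1)$. It is false for Step 2. Take $d=1$, $F(M)=M$, $H=0$ and $\sigma_1=\sigma_2=t^j$, so $\sigma_2(1)=1$ and $\sigma_2^{-1}$ is Dini. The functions with $u_j(0)=0$ and $u_j'(x)=\mathrm{sgn}(x)\bigl(\tfrac{j+1}{j}|x|\bigr)^{1/(j+1)}$ are viscosity solutions of $|u_j'|^ju_j''=1/j$ with $|u_j|\le 1$. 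They converge uniformly to $|x|$, which stays at distance $1/4$ from every affine function on $(-\tfrac12,\tfrac12)$.

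Your particular normalization $\tilde\sigma_i(t)=\sigma_i(\tau_kt)/\sigma_2(\tau_k)$ does not rescue this. It pins the law only at $t=1$. An admissible $\sigma_2$ can have steep relative drops on a sparse sequence of dyadic intervals, which is compatible with $\sigma_2^{-1}$ being Dini. Your scales satisfy $\tau_{k+1}\ge\tau_k/2$, so they cannot skip any small dyadic interval. Hence $\sigma_2(\tau_kt)/\sigma_2(\tau_k)\to0$ at a fixed $t<1$ along a subsequence.

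\textbf{How the paper closes it.} The missing idea, taken from Andrade et al., is to normalize at points $c_k\to0$ rather than at $t=1$.
\begin{itemize}
\item Lemma \ref{lem2.6}, applied to $\{\sigma_2^{-1}(\vartheta^k)\}\in\ell^1$ with $\vartheta=r/\mu_1$, gives $c_k\to0$ with $\sum_k\sigma_2^{-1}(\vartheta^k)/c_k\le\sum_k\sigma_2^{-1}(\vartheta^k)$.
\item The ratios $\mu_k\ge\mu_{k-1}$ are then chosen so that $\sigma_2^k(c_k)\ge1$. Here $\Phi^k(t,x)=\frac{\gamma_k}{r^k}[\sigma_1(\gamma_kt)+a(r^kx)\sigma_2(\gamma_kt)]$ and $\gamma_k=\prod_{i\le k}\mu_i$.
\item This makes the sequence shored-up (Definition \ref{def2.5}). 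So $\Gamma=\{\Phi^k\}$ is non-collapsing by Lemma \ref{lem2.7}, and Lemma \ref{lem4.1} gives one $\delta$ valid for every $k$.
\item The cost is that the scales are inflated by $1/c_k$. Whenever the normalization is active, $\gamma_k=c_k^{-1}\sigma_2^{-1}(r^k/\gamma_k)\le c_k^{-1}\sigma_2^{-1}(\vartheta^k)$, using $\gamma_k\ge\mu_1^k$. The point of Lemma \ref{lem2.6} is that $c_k$ can decay slowly enough for this to remain summable.
\end{itemize}

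\textbf{A smaller correction.} The quantity to invert is $\rho^k/\tau_k$, not something of order $\rho^k\|f\|$. The rescaled right-hand side is $\frac{\rho^k}{\tau_k\sigma_2(\tau_k)}f$, so you need $\tau_k\sigma_2(\tau_k)\gtrsim\rho^k$. A rule of the form $\tau_k\ge\sigma_2^{-1}(C\rho^k)$ only bounds this by $\|f\|/(C\tau_k)$, which blows up as $\tau_k\to 0$.
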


\begin{remark}
	 This result is new even for the case $a\equiv 0$. Observe that, we do not assume that the moduli of continuity are necessarily H\"{o}lder continuous. In particular, if $ \sigma_{i}^{-1}(i=1,2)$ behave like H\"{o}lder continuous functions near the origin, then solutions are locally $ C^{1,\alpha} $ for some $ 0 < \alpha <1 $. Therefore, our finding covers and extends the results of \cite{Andrade,Fili,B-Demengel2016,Birindelli2014ESAIM,Imbert1,Silva2020,Ricarte} in a unified way. 
\end{remark}
\begin{remark}
	Our finding can be
	viewed as an extension of the work \cite[Theorem 1]{Andrade2022}, now within the framework
	of Hamiltonian terms and double phase degenerate law as well as variable coefficients. It is worth emphasizing that this
	generalization of regularity results here is nontrivial. Indeed, due to the simultaneous presence of double-phase degenerate law and Hamiltonian term, the analytical tools in \cite{Andrade2022} cannot be applied directly and our analysis becomes more complicated and delicate. More precisely, we need to construct a new shored-up sequence of moduli of continuity tailored to our setting, which is a key ingredient towards the proof of Theorem \ref{main} based on iteration and convergence analysis, see Section \ref{sec3.3}.
\end{remark}
\begin{remark}
	An extension of our result above also holds to multi-phase degenerate fully nonlinear equation modelled by
	\begin{equation*}
		\bigg( \sigma_{0}(|Du|) + \sum_{i=1}^{N} a_{i}(x)\sigma_{i}(|Du|)                   \bigg)F(D^{2}u,x)+H(Du,x) = f(x) \quad \text{in} \quad B_{1},
	\end{equation*}
	where $0 \leq a_{i}(\cdot) \in C(B_{1})$, $\sigma_{0},\sigma_{i}:[0,\infty)\rightarrow [0,\infty)$ are moduli of continuity, $i\in\left\{1,2,\cdots,N\right\}$, 
	\begin{equation*} 
		 \sigma_{0}(t) \geq \sigma_{1}(t) \geq \cdots \geq \sigma_{N}(t) \;\; {\rm for\;all\;} t\in [0,1], 
	\end{equation*}
	and $\sigma_{N}^{-1}$ is Dini continuous.
\end{remark}
	
	However, there are many important examples of function $\Phi$ which are not covered by \eqref{phiform1} with $\sigma_{1},\sigma_{2}$ satisfying \eqref{A3}. For instance, given any $p>0$ and nonnegative continuous function $a(\cdot)$,
	\begin{equation}\label{example1}
		\Phi(t,x)=\frac{\left(e^{t}-1\right)^{p}}{t}+a(x)t^{p-1},\quad \Phi(t,x)=t^{p-1}+a(x)\frac{\log^{p}(1+t)}{t},
	\end{equation}
\begin{equation}\label{example2}
	\Phi(t,x)=\frac{\left(e^{t}-1\right)^{p}}{t}+a(x)\frac{\log^{p}(1+t)}{t},
\end{equation}
and many other examples. In addition, note that the examples above also do not fall in the realm of the paper \cite{Baasandorj,Huo2026}. Notably, Baasandorj et al. \cite{Baasandorj2023AML} investigated the $C^1$ regularity of viscosity solutions for degenerate/singular fully nonlinear elliptic equations of the form 
$$\Phi(|Du|)F(D^2 u) = f(x) \quad \text{in }\quad  B_1,$$
where $\Phi(|Du|):=\frac{\sigma(|Du|)}{|Du|}$, $\sigma:[0,\infty)\rightarrow [0,\infty)$ is a general modulus of continuity whose inverse $\sigma^{-1}$ is Dini continuous, and $\lim\limits_{t\rightarrow \infty}\sigma(t)=+\infty$. It is clear that $\Phi(t)=\frac{\left(e^{t}-1\right)^{p}}{t}$ and $\Phi(t)=\frac{\log^{p}(1+t)}{t}$ with $p>0$ fall into this framework. 
	
	At this point, a natural question is whether it is feasible to establish regularity results for equation \eqref{4doublephasezhufangcheng} with $\Phi$ of the form given in \eqref{example1} and \eqref{example2}.	
	To this end, we consider a class of $\Phi$ of the form
	\begin{equation}\label{xintiaojian}
		\Phi(|Du|,x)=\frac{\sigma_{1}(|Du|)+a(x)\sigma_{2}(|Du|)}{|Du|},
	\end{equation}
	where $\sigma_{1},\sigma_{2}$ and $a(\cdot)$ satisfy the same conditions given in \eqref{A3} and \eqref{A4}. In this setting, we examine the dichotomy: either
	\begin{equation}\label{4aaquyu0tiaojian}
	\lim\limits_{t\rightarrow 0^{+}}\frac{\sigma_{1}(t)}{t}=0
	\end{equation}
	or
	\begin{equation}\label{4aaquyuwuqiongtiaojian}
		\lim\limits_{t\rightarrow 0^{+}}\frac{\sigma_{2}(t)}{t}\geq \kappa_{0}>0.
	\end{equation}
Under the newly proposed conditions, we prove the differentiability of solutions to \eqref{4doublephasezhufangcheng} with $\Phi$ of the form \eqref{xintiaojian}.

Notably, since $\sigma_{1}(t)\geq \sigma_{2}(t)$ for all $t\in[0,1]$, \eqref{4aaquyu0tiaojian} implies that $	\lim\limits_{t\rightarrow 0^{+}}\frac{\sigma_{2}(t)}{t}=0$, and \eqref{4aaquyuwuqiongtiaojian} implies $\lim\limits_{t\rightarrow 0^{+}}\frac{\sigma_{1}(t)}{t}\geq \kappa_{0}>0$. Clearly, when $p>1$, the concrete examples given in \eqref{example1} and \eqref{example2} satisfy \eqref{xintiaojian}, \eqref{4aaquyu0tiaojian}, \eqref{A3} and \eqref{A4}; when $0<p\leq 1$, they fulfill \eqref{xintiaojian}, \eqref{4aaquyuwuqiongtiaojian}, \eqref{A3} and \eqref{A4}. Furthermore, observe that \eqref{4aaquyuwuqiongtiaojian} includes the singular case.

We are now in a position to state the second main result on $C^{1}$ regularity under assumptions \eqref{xintiaojian} and \eqref{4aaquyu0tiaojian}.
	\begin{theorem}\label{main22}
		Let $u\in C(B_{1})$ be a viscosity solution of \eqref{4doublephasezhufangcheng} with \eqref{xintiaojian}. Assume \eqref{A1}-\eqref{A6} and \eqref{4aaquyu0tiaojian} hold. Then $ u \in C^{1}_{\rm loc}(B_{1})$. Moreover, there exists a modulus of continuity $ \omega : \mathbb{R}_{0}^{+} \rightarrow  \mathbb{R}_{0}^{+} $ depending only on $d,\lambda,\Lambda,C_{F},\theta,\sigma_{1}, \sigma_{2},\|u\|_{L^{\infty}(B_{1})}$, $\mathcal{M}$ and $\|f\|_{L^{\infty}(B_{1})}$ such that
	\begin{equation*}
		|Du(x)-Du(y)| \leq \omega (|x-y|)
	\end{equation*}
	for every $ x,y \in B_{1/4} $. 
\end{theorem}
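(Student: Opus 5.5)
The plan is to reduce Theorem \ref{main22} to the framework behind Theorem \ref{main}: geometric iteration of approximating hyperplanes, with tangential analysis at each step. The new point is that \eqref{xintiaojian} divides by $|Du|$. Writing $\tilde\sigma_i(t):=\sigma_i(t)/t$, we have $\Phi(|Du|,x)=\tilde\sigma_1(|Du|)+a(x)\tilde\sigma_2(|Du|)$. Under \eqref{4aaquyu0tiaojian}, $\tilde\sigma_1(t)\to0$ and $\tilde\sigma_2(t)\to0$ as $t\to0^+$, so the equation is degenerate, as in the setting of Theorem \ref{main}. However, $\tilde\sigma_i$ need not be monotone. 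I therefore replace them by monotone envelopes $\hat\sigma_i(t):=\sup_{0<s\le t}\sigma_i(s)/s$, which are moduli of continuity on $[0,1]$.

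The Dini condition must be checked on these envelopes. Since $\hat\sigma_i(t)\ge \sigma_i(t)/t\ge\sigma_i(t)$ for $t\le1$, we get $\hat\sigma_i^{-1}\le\sigma_i^{-1}$ near $0$, so $\hat\sigma_2^{-1}$ is Dini. Only a lower bound $\Phi\gtrsim$ (degeneracy law) is needed in the small-slope regime, and this must be argued carefully. Indeed, the proof uses $\Phi$ only through two facts:
\begin{itemize}
\item where $|Du+\vec q|\ge \gamma$, we have $\Phi\ge \sigma_1(\gamma)/(\text{something})$, which is bounded below;
\item a lower bound in terms of $\sigma_1$ along the iteration.
\end{itemize}
I would actually work directly with $\sigma_1(|\xi|)/|\xi|$ and use $\sigma_1(t)/t\ge\sigma_1(t)$ for $t\le1$ wherever monotonicity is needed.

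\textbf{Step 1 (compactness).} For normalized solutions with $\|u\|_\infty\le1$, $\|f\|_\infty\le\varepsilon$, $\mathcal M\le\varepsilon$ and ${\rm osc}_F\le\varepsilon$, I would prove an approximation lemma: $u$ is $\delta$-close to some $h$ solving $\bar F(D^2h)=0$, which has $C^{1,\bar\alpha}$ estimates. The key tool is the "cutting" argument. Either $|Du+\vec q|$ is small at test points, in which case the bad set is controlled, or the equation $\Phi(|D\varphi+\vec q|,x)F(D^2\varphi,x)=f-H$ gives $|F|\le (\varepsilon+\varepsilon\sigma_1(|D\varphi+\vec q|))\,|D\varphi+\vec q|/\sigma_1(|D\varphi+\vec q|)$. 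For large slopes this is $\lesssim\varepsilon$, since $\sigma_1(t)/t$ combined with the $H$ growth gives a bound $\varepsilon(1+t/\sigma_1(t))$. That bound is large when $t$ is large unless $\sigma_1$ grows linearly, so I would use the Hamiltonian bound together with the Ishii–Lions/equicontinuity estimates of \cite{Huo2026} to obtain uniform Lipschitz bounds, handling large slopes by a rescaling at each step. Step 1 also gives local Hölder/Lipschitz compactness via Krylov–Safonov applied where the gradient is large, plus a uniform gradient bound.

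\textbf{Step 2 (iteration).} Following Section \ref{sec3.3}, I would find affine functions $\ell_k$ with
\[
\sup_{B_{\rho^k}}|u-\ell_k|\le \rho^k\,\tau_k,
\]
where $\tau_k$ is a shored-up sequence built from $\sigma_1^{-1}$ and $\sigma_2^{-1}$ evaluated along $\rho^k$, with the property that $\sum_k\tau_k<\infty$ by the Dini assumption. The rescaling is $u_k(x)=(u(\rho^kx)-\ell_k(\rho^k x))/(\rho^k\tau_k)$. One must check that the rescaled degeneracy $\Phi_k(t,x)=\Phi(\tau_kt+|\vec q_k|,\rho^kx)$ and the rescaled Hamiltonian stay in the class of Step 1 with smallness. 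Here $\rho^{k}\tau_k^{-1}/\Phi$ must be small, and this is where $\tau_k\gtrsim \sigma_1^{-1}$-type choices enter. Two regimes arise:
\begin{itemize}
\item if $|\vec q_k|\gg\tau_k$, the equation is uniformly elliptic with coefficient bounded below and we conclude via $C^{1,\alpha}$ theory for uniformly elliptic equations with a Hamiltonian;
\item otherwise we continue the iteration.
\end{itemize}

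\textbf{Step 3 (conclusion).} Convergence of $\vec q_k$ yields $Du$ with modulus $\omega(r)\approx\sum_{j\ge k}\tau_j+\tau_k$ for $\rho^{k+1}<r\le\rho^k$, which is a modulus of continuity by Dini summability.

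The main obstacle is Step 2's smallness of the rescaled right-hand side. The factor $|Du|$ in the denominator turns the needed condition into $\rho^k\le\tau_k\,\sigma_1(\tau_k)/\tau_k=\sigma_1(\tau_k)$, that is, $\tau_k\ge\sigma_1^{-1}(\rho^k)$. Meanwhile the $H$-term requires $\mathcal M\rho^k(1+\sigma_1)\,\tau_k/\sigma_1(\tau_k)$ to be small, and \eqref{4aaquyu0tiaojian} is precisely what makes $\tau_k/\sigma_1(\tau_k)$ the problematic factor to dominate. I would first try $\tau_k:=\max\{\sigma_2^{-1}(\rho^{k/2}),\,\rho^{k\alpha}\}$ shored up, and verify the balances case by case.
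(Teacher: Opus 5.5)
There is a genuine gap: the proposal never closes the step you yourself flag as the main obstacle. Step 2 ends with a trial choice of $\tau_k$ to be ``verified case by case''. Step 1 handles large slopes only by ``a rescaling at each step'' and an appeal to estimates whose structural hypotheses this $\Phi$ does not satisfy in general.

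The difficulty is real. In the case $|\xi_j|\to\infty$, the compactness argument of Lemma \ref{lem4.1} uses $\Phi_j(|b+\xi_j|,x_j)\ge\sigma_1^j(1)$, i.e.\ monotonicity of the degeneracy law. Your rescaled laws $s\mapsto\bigl(\sigma_1(\tau_k s)+a(\rho^k x)\,\sigma_2(\tau_k s)\bigr)/(\rho^k s)$ are not monotone. They can even tend to $0$ as $s\to\infty$: take $\sigma_1(t)=\sigma_2(t)=t^2/(1+t^2)$, which satisfies \eqref{A3} and \eqref{4aaquyu0tiaojian}. So the family is not non-collapsing, while the ratio of $|f|+|H|$ to $\Phi_k$ grows linearly in $s$.

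Your monotone envelope $\hat\sigma_i(t)=\sup_{0<s\le t}\sigma_i(s)/s$ does not repair this. It is an \emph{upper} bound for $\sigma_i(t)/t$, whereas non-collapsing and the control of $(f-H)/\Phi$ both require \emph{lower} bounds. Likewise, $\sigma_1(t)/t\ge\sigma_1(t)$ holds only for $t\le1$, which is precisely not the large-slope regime. The same non-monotonicity makes your regime ``$|\vec q_k|\gg\tau_k$'' non-uniform. Dividing by $\Phi\approx\sigma_1(|\vec q_k|)/|\vec q_k|$ produces a right-hand side of size $|\vec q_k|/\sigma_1(|\vec q_k|)$, which is not controlled by $\tau_k/\sigma_1(\tau_k)$.

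The missing idea is that no new tangential analysis or iteration is needed. Your remark that $\Phi\ge\sigma_1(\gamma)/(\text{something})$ becomes a real bound only once ``something'' is a Lipschitz constant, and then it is simplest to use that bound globally.

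The paper first proves Lipschitz regularity (Proposition \ref{qiyituihualipjinxing}) by the Ishii--Lions argument of Proposition \ref{4aajinxingjieguo2}(i), with no shift $\xi$. At the maximum point $L_1/4\le|\xi_{\hat x}|,|\xi_{\hat y}|\le 2L_1$, so $\Phi\ge\sigma_1(1)/(2L_1)$. Hence $|(f-H)/\Phi|\le 2L_1\bigl(\mathcal M+(\mathcal M+\|f\|_{L^\infty(B_1)})/\sigma_1(1)\bigr)$. This bound is only linear in $L_1$, so it is absorbed by the concavity term $4L_1\lambda\beta(1+\beta)w_0|\hat x-\hat y|^{\beta-1}$, because $|\hat x-\hat y|\le 2/\sqrt{L_2}$ is small.

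Next, test functions in the definition of viscosity solution have $D\varphi(x_0)\neq0$, so you may multiply the equation by $|D\varphi(x_0)|$. Then $u$ solves $\bigl(\sigma_1(|Du|)+a(x)\sigma_2(|Du|)\bigr)F(D^2u,x)=\hat f$ with $\hat f=|Du|\bigl(f-H(Du,x)\bigr)$. This $\hat f$ is bounded by the Lipschitz estimate; equivalently, it is a Hamiltonian term that only needs control for $|\xi|\le C$. The equation is exactly of the form \eqref{phiform1}, so Theorem \ref{main}, after normalization and covering, gives $u\in C^1_{\rm loc}$ with the stated modulus.

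In particular, the quotient $\sigma_i(t)/t$ never enters the compactness lemma or the choice of the scales $\mu_k$. The balancing you attempt in Step 2 is already carried out, for $\sigma_1,\sigma_2$ themselves, in Section \ref{sec3.3}.
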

Finally, for singular or mildly degenerate case, we obtain local $C^{1,\alpha}$ regularity results as
follows.
\begin{theorem}\label{main333}
	Let $u\in C(B_{1})$ be a viscosity solution of \eqref{4doublephasezhufangcheng} with \eqref{xintiaojian}. Assume \eqref{A1}-\eqref{A6} and \eqref{4aaquyuwuqiongtiaojian} hold. Then $ u \in C^{1,\alpha}_{\rm loc}(B_{1})$ for some $\alpha\in(0,1)$ depending only upon $d$, $\lambda$ and $\Lambda$.  
\end{theorem}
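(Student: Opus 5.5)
Under \eqref{4aaquyuwuqiongtiaojian}, the plan is to reduce Theorem \ref{main333} to the known $C^{1,\alpha}$ theory for equations whose degeneracy is bounded below, where $\Phi$ acts like $|\xi|^{p}$ with $p\le 0$. Set $\Psi(t,x):=\Phi(t,x)=\frac{\sigma_1(t)+a(x)\sigma_2(t)}{t}$. By \eqref{4aaquyuwuqiongtiaojian} there is $t_0\in(0,1]$ with $\sigma_1(t)/t\ge\sigma_2(t)/t\ge\kappa_0/2$ on $(0,t_0]$. For $t\ge t_0$, monotonicity gives $\sigma_1(t)/t\ge\sigma_1(t_0)/t$, which is not bounded below uniformly. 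We therefore treat small and large gradients separately, using the same dichotomy as in the $C^{1,\alpha}$ proofs for singular equations (Imbert--Silvestre, Birindelli--Demengel).

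\textbf{Step 1 (normalization and compactness).} We rescale $u_r(x)=\frac{u(rx)}{K}$ with $K$ large and $r$ small, so that $\|u_r\|_\infty\le1$, $\|f_r\|_\infty\le\varepsilon$, and the Hamiltonian contribution is small. Here \eqref{A5} gives $|H|\le\mathcal M(1+\sigma_1(|\xi|))$. Dividing by $\Phi$, which is bounded below by $\kappa_0/2$ when $|\xi|\le t_0$ and behaves like $\sigma_1(|\xi|)/|\xi|$ for large $|\xi|$, shows that $|H|/\Phi\lesssim 1+|\xi|$. This is a first-order term with linear growth, which is admissible in the uniformly elliptic theory. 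The equation then implies, in the viscosity sense, the Pucci inequalities
\[
\mathcal M^-(D^2u)-C(1+|Du|)\le \frac{C\|f\|_\infty}{\kappa_0}\quad\text{and}\quad \mathcal M^+(D^2u)+C(1+|Du|)\ge -\frac{C\|f\|_\infty}{\kappa_0}
\]
wherever $|Du|\le t_0$. Where $|Du|$ is large, $\Phi$ is positive and the term $f/\Phi$ is bounded as well, since $t/\sigma_1(t)\le t/\sigma_1(t_0)$ only grows linearly. So, after a $\sup$ bound, $u$ is a viscosity solution of $F(D^2u,x)=g$ with $|g|\le C(1+|Du|)$ in a uniformly elliptic class. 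The main subtlety is making this precise for test functions where $\Phi$ could blow up as $t\to0$ in the singular case. We handle it by checking that $1/\Phi\le 2/\kappa_0$ near $0$, so singularity only helps.

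\textbf{Step 2 (approximation and iteration).} Since the reduced problem is uniformly elliptic with bounded right-hand side, Lipschitz gradient-growth terms and Hölder coefficients via \eqref{A2}, Caffarelli's perturbation theory, together with the Świech/Winter $C^{1,\alpha}$ estimates for $F(D^2u,x)+b|Du|$-type equations, yields $C^{1,\alpha}$ with $\alpha$ depending on $d,\lambda,\Lambda$. Alternatively, we run the standard iteration: an approximation lemma (a compactness argument showing that $u$ is $\varepsilon$-close to an $F$-harmonic function $h$ with $C^{1,\bar\alpha}$ estimates), followed by geometric decay $\sup_{B_{\rho^k}}|u-\ell_k|\le\rho^{k(1+\alpha)}$ with $\alpha<\bar\alpha$. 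In each rescaling $u_k(x)=\frac{(u-\ell_k)(\rho^kx)}{\rho^{k(1+\alpha)}}$, the equation for $u_k$ has $\Phi$ evaluated at $|D\ell_k+\rho^{k\alpha}Du_k|$.

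\textbf{Main obstacle.} The hardest point is keeping the lower bound on $\Phi$ uniform along the iteration when the slopes $|D\ell_k|$ are large, where \eqref{4aaquyuwuqiongtiaojian} gives no information. We plan to use the observation from Step 1: for $|\xi|\ge t_0$ the quantity $|f|/\Phi(|\xi|,x)\le |f|\,|\xi|/\sigma_1(t_0)$ is controlled by $|\xi|$, so it is absorbed as a gradient-dependent bounded term. Moreover, $|D\ell_k|$ stays uniformly bounded by a geometric sum. The resulting bounds therefore depend only on the data, and the exponent $\alpha$ depends only on $d,\lambda,\Lambda$ through $\bar\alpha$.
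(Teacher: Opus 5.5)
Your argument is correct, but it takes a different route from the paper.

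\textbf{The paper's route.} It first proves the Lipschitz estimate of Proposition \ref{qiyituihualipjinxing}, which gives $|Du|\le C$. It then observes that $\hat f=|Du|\,(f-H(Du,x))/(\sigma_1(|Du|)+a(x)\sigma_2(|Du|))$ is bounded by $\max\{2/\kappa_0,\,C/\sigma_1(t_0)\}\bigl(\|f\|_{L^\infty}+\mathcal M(1+\sigma_1(C))\bigr)$. After that, only Caffarelli's classical $C^{1,\alpha}$ theory for $F(D^2u,x)=\hat f$ with bounded right-hand side is needed.

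\textbf{Your route.} You use $t/\sigma_1(t)\le\max\{2/\kappa_0,\,t/\sigma_1(t_0)\}$ to get $|f-H(\xi,x)|/\Phi(|\xi|,x)\le C(1+|\xi|)$ for every $\xi\neq0$. Hence $u$ satisfies $-C(1+|Du|)\le F(D^2u,x)\le C(1+|Du|)$ in the viscosity sense. This is really a pair of inequalities rather than an equation $F=g$, but your compactness/iteration handles that class. You then appeal to $C^{1,\alpha}$ theory with linear first-order terms (\'{S}wi\k{e}ch) or run the iteration directly.

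\textbf{What each buys.} Your route avoids the Lipschitz estimate entirely, and it makes your ``main obstacle'' essentially moot. Under $u_k(x)=(u-\ell_k)(\rho^kx)/\rho^{k(1+\alpha)}$ the right-hand side becomes $C\rho^{k(1-\alpha)}(1+|D\ell_k|)+C\rho^{k}|Du_k|$. This is small once $|D\ell_k|$ is bounded, so no lower bound on $\Phi$ is needed along the iteration. The price is that you need Krylov--Safonov and $C^{1,\alpha}$ estimates for classes with a bounded gradient coefficient. The paper only needs the bounded-RHS theory, but pays with the a priori Lipschitz bound, which it needs anyway for Theorem \ref{main22}.

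\textbf{One step to tighten.} This applies to both routes. The definition of viscosity solution only tests with $D\varphi(x_0)\neq0$, so the divided inequalities at contact points with $D\varphi(x_0)=0$ need the standard perturbation argument. Use Birindelli--Demengel, or the $\gamma|P_T(x)|$ device from the proof of Lemma \ref{lem4.1}. The bound $1/\Phi\le 2/\kappa_0$ near $0$ only controls the size of the right-hand side there; it does not supply this step. The paper also covers it only by citation.
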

\begin{remark}
Clearly, Theorems \ref{main22} and \ref{main333} encompass and generalize the main results of \cite{Baasandorj2023AML}(without Hamiltonian term), which considers constant
coefficient fully nonlinear elliptic equations with single degenerate/singularity rate. In addition, it is worth highlighting that our assumptions are weaker than those in \cite{Baasandorj2023AML}, since
we do not require $\lim\limits_{t\rightarrow \infty}{\sigma_{i}(t)}=\infty$, $i=1,2$. 
\end{remark}

The remainder of this paper is organized as follows. In Section \ref{section2}, we introduce the basic notions and some well-known results. Section \ref{section3} is devoted to proving the borderline gradient continuity stated in Theorem \ref{main}. In the last section, we complete the proof of Theorems \ref{main22} and \ref{main333} regarding $C^{1}$ and $C^{1,\alpha}$ regularity.
\section{Preliminaries}\label{section2}
Throughout this paper, let $S^{d}$ be the set of all real symmetric $d\times d$ matrices and $B_{r}(x_{0})$ be the open ball with radius $r$ and centred at $x_{0}\in\rn$. In particular, we shall simply denote $B_{r}:=B_{r}(0)$. Also, we have the usual partial ordering: $A\leq B$ in $S^{d}$ means that $\left\langle A\xi,\xi\right\rangle\leq \left\langle B\xi,\xi\right\rangle$ for any $\xi \in \mathbb{R}^{d}$. In other words, $B-A$ is positive semidefinite. The symbol $C$ denotes a positive constant whose value may vary from line to line, and only the relevant dependencies are specified in parentheses. Besides, a constant is said to be universal if it depends at most upon the structure constants appearing in assumptions \eqref{A1}-\eqref{A6}.

Let us begin with the definition of the Pucci extremal operators.
\begin{definition}
	Let $0<\lambda\leq \Lambda$ be as in \eqref{A1}. We define the extremal Pucci operators $P_{\lambda,\Lambda}^{\pm}:S^{d}\rightarrow \mathbb{R}$ as follows:
	$$P_{\lambda,\Lambda}^{+}(M):=\Lambda\sum\limits_{e_{i}>0}e_{i}+\lambda\sum\limits_{e_{i}<0}e_{i}$$
	and 
	$$P_{\lambda,\Lambda}^{-}(M):=\lambda\sum\limits_{e_{i}>0}e_{i}+\Lambda\sum\limits_{e_{i}<0}e_{i},$$
	where $\{e_{i}\}_{i=1}^{d}$ are the eigenvalues of the matrix $M$.
\end{definition}
With the Pucci operators in hand, the uniformly $(\lambda,\Lambda)$-ellipticity of the operator $F$ can be reformulated as
\begin{equation}\label{pucciyizhitioyuan}
	P_{\lambda,\Lambda}^{-}(N)\leq F(M+N,x)-F(M,x)\leq P_{\lambda,\Lambda}^{+}(N)
\end{equation}
for all $M,N\in S^{d}$ and $x\in B_{1}$.

In the sequel, for the reader's convenience, we shall present the notion of viscosity solution for the following equations
\begin{equation}\label{model22}
	G(D^{2}u,Du,x):=f(x)-\Phi(\abs{Du}, x)F(D^2 u, x)-H(Du,x)=0 \quad  \text{in} \quad B_{1},
\end{equation}
which was introduced in \cite[Definition 2.7]{Birindelli2004} and \cite[Definition 2.1]{Birindelli2010JDE}.
\begin{definition}
	A function $u\in C(B_{1})$ is a viscosity supersolution (resp. subsolution) to \eqref{model22}, if for every $x_0 \in B_{1}$ either there exists $\delta>0$ such that $u$ is constant in $B_{\delta}(x_{0})$ and $f(x)\geq 0$ (resp. $f(x)\leq 0$) for all $x\in B_{\delta}(x_{0})$, or, for all $\varphi \in C^2\left(B_{1}\right)$ such that $ u -\varphi$ attains a local minimum (resp. local maximum) at $x_0$ and $D\varphi(x_{0})\neq 0$, it holds  
	$$
	G(D^2\varphi(x_0),D\varphi(x_0),x_0) \geq  0 \quad({\rm resp.}\; G(D^2\varphi(x_0),D\varphi(x_0),x_0) \leq  0).
	$$
	Finally, a function $u\in C(B_{1})$ is said to be a viscosity solution of \eqref{model22} if it is simultaneously a viscosity supersolution and a viscosity subsolution. 
\end{definition}

We conclude this section with the concepts of non-collapsing sets and shored-up sequences, as well as two known lemmas from \cite{Andrade2022}. These are pivotal for proving our main theorems.

\begin{definition}[\bf Non-collapsing]
	A set $\Gamma$ of moduli of continuity defined over an interval $I\in \mathcal{I}:=\left\{(0,T]|0<T<\infty\right\}\cup \left\{\mathbb{R}_{0}^{+}\right\}$ is said to be non-collapsing, whenever for any sequence $\left\{f_j\right\}\subset\Gamma$ and any sequence $\left\{a_{j}\right\}\subset I$, we have 
	 $$f_j(a_j)\rightarrow 0 \quad {\rm implies\quad}  a_j\rightarrow0.$$
\end{definition}
\begin{definition}[\bf Shored-up]\label{def2.5}
	A sequence of moduli of continuity $\left\{\gamma_j\right\}$ is said to be shored-up, if there exists a sequence of positive numbers $\{a_j\}$ with $a_j\rightarrow0$ such that
	$$\inf_j\gamma_j(a_j)>0\quad {\rm for \;each \;} j\in\mathbb{N}.$$
\end{definition}
\begin{lemma}\label{lem2.6} {\rm(cf. \cite[Lemma 1]{Andrade2022})} 
	Let $\epsilon,\sigma>0$ and $\left\{a_j\right\}\in \ell^{1}$. Then there exists a positive sequence $\left\{c_j\right\}$ satisfying 
	$\lim\limits_{j\rightarrow \infty} c_{j}=0$ and $\max\limits_j{|c_j|}\le\frac{1}{\epsilon}$,
	such that
	\begin{equation*}
	\left\{\frac{a_{j}}{c_{j}}\right\}\in\ell^1 \quad {\rm and}\quad 	\epsilon\left(1-\frac{\sigma}{2}\right)\sum_{j=1}^{\infty}a_{j}\leq \sum_{j=1}^{\infty} \frac{a_j}{c_j}\leq \epsilon(1+\sigma)\sum_{j=1}^{\infty}a_{j}.
	\end{equation*}
\end{lemma}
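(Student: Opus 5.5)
We prove the statement for nonnegative sequences $\{a_j\}$, which is the setting in which it is applied (the $a_j$ will be values of a modulus along a geometric sequence). The construction is explicit: we keep $c_j=1/\epsilon$ on a long initial block and then let $c_j$ decay like the square root of the tail of the series. Set $S:=\sum_{j}a_j$. If $S=0$, then $c_j:=1/(\epsilon j)$ works trivially, so assume $S>0$. Define the tails
$$r_k:=\sum_{j\geq k}a_j, \qquad\text{so that } r_k\downarrow 0 \text{ and } a_k=r_k-r_{k+1}.$$
Put $\eta:=\min\{\sigma/2,1\}$ and fix $N$ with $0\leq r_N\leq \eta S$. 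Then define
$$c_j:=\frac1\epsilon \ (j<N),\qquad c_j:=\frac1\epsilon\sqrt{\frac{r_j}{r_N}}\ (j\geq N,\ r_j>0),\qquad c_j:=\frac{1}{\epsilon j}\ (r_j=0).$$
If $r_N=0$ already, only the first and third cases occur.

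The simple properties come first. Each $c_j$ is positive. Since $r_j\leq r_N$ for $j\geq N$, we have $c_j\leq 1/\epsilon$ for all $j$. Moreover $c_j\to0$: either $r_j\to0$ through positive values, or $r_j=0$ eventually and then $c_j=1/(\epsilon j)$. Indices with $r_j=0$ have $a_j=0$, so they contribute nothing to $\sum a_j/c_j$.

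The key estimate is the telescoping bound
$$\frac{r_j-r_{j+1}}{\sqrt{r_j}}\leq 2\bigl(\sqrt{r_j}-\sqrt{r_{j+1}}\bigr)\qquad (r_j>0).$$
It follows by writing $r_j-r_{j+1}=(\sqrt{r_j}-\sqrt{r_{j+1}})(\sqrt{r_j}+\sqrt{r_{j+1}})$ and using $\sqrt{r_j}+\sqrt{r_{j+1}}\leq 2\sqrt{r_j}$. Summing over $j\geq N$ gives
$$\sum_{j\geq N}\frac{a_j}{c_j}=\epsilon\sqrt{r_N}\sum_{j\geq N,\ r_j>0}\frac{a_j}{\sqrt{r_j}}\leq 2\epsilon r_N.$$
In particular $\{a_j/c_j\}\in\ell^1$.

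For the two-sided bound, the first block contributes exactly $\epsilon(S-r_N)$, and the tail contribution is nonnegative. Hence
$$\epsilon(1-\eta)S\leq \epsilon(S-r_N)\leq \sum_{j=1}^{\infty}\frac{a_j}{c_j}\leq \epsilon(S-r_N)+2\epsilon r_N=\epsilon(S+r_N)\leq \epsilon(1+\eta)S.$$
Since $\eta\leq\sigma/2$, this gives $\epsilon(1-\tfrac{\sigma}{2})S\leq\sum_{j}a_j/c_j\leq\epsilon(1+\sigma)S$, as claimed.

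The one step needing care is the telescoping estimate. It is what reconciles $c_j\to0$ with summability of $a_j/c_j$ at a cost of only $O(r_N)$. The remaining bookkeeping concerns the degenerate cases where the tails vanish or $S=0$.
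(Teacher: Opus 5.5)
Your proof is correct. The paper does not prove this lemma: it quotes it from \cite[Lemma 1]{Andrade2022} and uses it once, with $a_j=\sigma_2^{-1}(\vartheta^j)\geq 0$. So your argument is a self-contained substitute, not a variant of something in the text. Your construction is Dini's classical remainder trick: you freeze $c_j=1/\epsilon$ until the tail $r_N$ drops below $\eta S$, then let $c_j$ decay like $\sqrt{r_j/r_N}$. What it buys is an explicit formula for $c_j$ and the sharper bound $\epsilon S\leq\sum_j a_j/c_j\leq \epsilon(S+r_N)$. The lower bound follows at once from $c_j\leq 1/\epsilon$ and $a_j\geq 0$, so the factor $1-\sigma/2$ is never needed. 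The upper constant can be pushed as close to $\epsilon$ as you like by taking $N$ large.

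Restricting to $a_j\geq 0$ loses nothing. Read literally with signed sums, the statement is false whenever $\sum_j a_j<0$. In that case $\epsilon(1-\sigma/2)\sum_j a_j>\epsilon(1+\sigma)\sum_j a_j$, so no sequence $\{c_j\}$ can satisfy both inequalities. The meaningful versions are for nonnegative sequences or with $\ell^1$-norms $\sum_j|a_j|$. The norm version follows from your argument by building $c_j$ from the tails of $|a_j|$.

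One cosmetic fix: choose $N$ minimal with $r_N\leq \eta S$. Then $r_j>\eta S>0$ for $j<N$, and your first and third cases never overlap.
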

\begin{lemma}\label{lem2.7}{\rm(cf. \cite[Proposition 5]{Andrade2022})} 
	If a sequence of moduli of continuity $\{\gamma_j\}$ is shored-up, then $\Gamma=\cup_{j\in\mathbb{N}}\{\gamma_j\}$ is non-collapsing.
\end{lemma}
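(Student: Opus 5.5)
The plan is a direct argument by contradiction that combines the monotonicity of moduli of continuity with the uniform lower bound supplied by the shored-up property. First we fix how Definition \ref{def2.5} is read, since the phrase ``for each $j$'' only makes sense this way: there is a sequence $a_k\rightarrow 0$ of positive numbers such that, for every fixed $k\in\mathbb{N}$,
$$\eta_k:=\inf_{j\in\mathbb{N}}\gamma_j(a_k)>0.$$
In words, each level $a_k$ is bounded below uniformly over the whole family. This is the form used in \cite{Andrade2022}. The weaker diagonal reading $\inf_j\gamma_j(a_j)>0$ would also suffice, by the same argument with $a_{k}$ replaced by a tail element $a_{j}$, as long as one can compare $\gamma_{j_n}$ at a common point. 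So we work with the per-level version.

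Now take an arbitrary sequence $\{f_n\}\subset\Gamma$ and points $\{b_n\}\subset I$ with $f_n(b_n)\rightarrow 0$. Write $f_n=\gamma_{j_n}$ for some indices $j_n\in\mathbb{N}$; no structure is assumed on $\{j_n\}$, and repetitions are allowed. Suppose, for contradiction, that $b_n\not\rightarrow 0$. Then there exist $\delta>0$ and a subsequence, still denoted $\{b_n\}$, with $b_n\geq\delta$ for all $n$.

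Since $a_k\rightarrow0$, we can choose an index $k_0$ with $a_{k_0}<\delta$; we may assume $a_{k_0}\in I$, since $I$ is an interval containing $(0,\delta]$. Each $\gamma_{j_n}$ is monotone increasing, so for every $n$ in the subsequence
$$f_n(b_n)=\gamma_{j_n}(b_n)\geq\gamma_{j_n}(a_{k_0})\geq\inf_{j}\gamma_j(a_{k_0})=\eta_{k_0}>0.$$
This contradicts $f_n(b_n)\rightarrow0$. Hence $b_n\rightarrow0$, and $\Gamma$ is non-collapsing.

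There is no real analytic obstacle here. The only delicate point is the correct quantifier in the definition of shored-up, namely that the infimum over the family is taken at a fixed small level $a_k$ and is positive for every $k$. Once that reading is fixed, monotonicity transfers the lower bound from $a_{k_0}$ to every point $b_n\geq\delta$, uniformly in the index $j_n$.
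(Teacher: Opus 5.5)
Your proof is correct and is the same argument as the one behind \cite[Proposition 5]{Andrade2022}, which the paper cites without reproducing: under the per-level reading $\inf_j\gamma_j(a_k)>0$, you choose $a_{k_0}<\delta$ and use monotonicity to get $\gamma_{j_n}(b_n)\geq\inf_j\gamma_j(a_{k_0})>0$ uniformly in $j_n$, which contradicts $f_n(b_n)\rightarrow 0$. The only slip is in your aside, where the comparison is backwards: the diagonal reading $\inf_j\gamma_j(a_j)>0$ is the \emph{stronger} hypothesis, not the weaker one. It implies the per-level reading, because for fixed $k$ all but finitely many $j$ satisfy $a_j\leq a_k$, and the finitely many remaining values $\gamma_j(a_k)$ are positive; on the other hand, a constant sequence $\gamma_j\equiv\gamma$ is shored-up per level but not diagonally. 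So your main argument covers both readings as it stands.
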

\section{$C^{1}$ regularity to \eqref{4doublephasezhufangcheng} with $\Phi$ of the form \eqref{phiform1}}\label{section3}
This section is devoted to the proof of Theorem \ref{main} on the borderline gradient continuity of solutions to \eqref{4doublephasezhufangcheng} with $\Phi$ of the form \eqref{phiform1}. 
\subsection{H\"{o}lder continuity to perturbed equations}
In this subsection, we establish a compactness result for
viscosity solutions to the following perturbed equations
\begin{equation}\label{4raodongfangcheng}
	\Phi(|Du+\xi|,x)F(D^2 u, x)+H(Du+\xi,x) =f(x) \quad  \text{in} \quad B_{1},
\end{equation}
where $\xi$ is an arbitrarily vector in $\rn$. The reasoning employed in this subsection is inspired by the
methods put forward in \cite{Imbert1}. However, unlike \cite{Imbert1}, the presence of the double phase gradient term and Hamiltonian term requires careful handling of $|Du+\xi|$. 
Moreover, we need to introduce a new smooth auxiliary function tailored to our setting of variable coefficients.

\begin{proposition}\label{4aajinxingjieguo2}
	Suppose that the assumptions \eqref{A1}-\eqref{A6} and \eqref{phiform1} hold.
	Let $\xi \in \rn$ be an arbitrarily vector and $u\in C(B_{1})$ be a viscosity solution to \eqref{4raodongfangcheng} with $\|u\|_{L^{\infty}(B_{1})}\leq 1$.
	Then there exists a constant $\mathcal{J}_{0}=\mathcal{J}_{0}(d,\lambda,\Lambda,C_{F},\theta,\sigma_{1}(1),\mathcal{M},\|f\|_{L^{\infty}(B_{1})})>1$ such that
	\begin{itemize}
		\item   [{\rm$({{\rm i}})$}] if $\abs{\xi}\geq  \mathcal{J}_{0}$, then $u\in C_{\rm loc}^{0,1}(B_{1})$ with the estimate
		$$[u]_{C^{0,1}(B_{1/2})}\leq C(d,\lambda,\Lambda,C_{F},\theta,\sigma_{1}(1),\mathcal{M},\|f\|_{L^{\infty}(B_{1})}).$$
		\item [{\rm$({{\rm ii}})$}] If $\abs{\xi}< \mathcal{J}_{0}$, then
		$u\in C_{\rm loc}^{0,\tau}(B_{1})$ for some $\tau\in (0,1)$ with the estimate  
		$$[u]_{C^{0,\tau}(B_{1/2})}\leq C(d,\lambda,\Lambda,C_{F},\theta,\sigma_{1}(1),\mathcal{M},\|f\|_{L^{\infty}(B_{1})}).$$
	\end{itemize}
\end{proposition}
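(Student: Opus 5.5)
Following the Ishii--Lions approach of \cite{Imbert1}, we show that for every $x_0\in B_{1/2}$ the function
\[
\Psi(x,y):=u(x)-u(y)-L\,\varphi(|x-y|)-\frac{K}{2}|x-x_0|^2-\frac{K}{2}|y-x_0|^2
\]
satisfies $\sup_{\overline B_1\times\overline B_1}\Psi\le 0$. The choice of $\varphi$ depends on the case. In case (i) we take $\varphi(t)=t-\frac{1}{2-\beta}t^{2-\beta}$ (the Lipschitz profile), and in case (ii) we take $\varphi(t)=t^{\tau}$ (the Hölder profile). We fix $K=32$ so that the localization terms dominate $2\|u\|_\infty\le 2$ near $\partial B_1$, and choose $L$ large and universal. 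Arguing by contradiction, assume $\sup\Psi>0$, attained at $(\bar x,\bar y)$. Then $\bar x\neq\bar y$, both points lie in $B_{3/4}$, and $L\varphi(|\bar x-\bar y|)\le 2$, so $\delta:=|\bar x-\bar y|$ is small. The Ishii--Jensen lemma gives jets $(\eta_1,X)\in\overline J^{2,+}u(\bar x)$ and $(\eta_2,Y)\in\overline J^{2,-}u(\bar y)$ with
\[
\eta_1=L\varphi'(\delta)e+K(\bar x-x_0),\qquad \eta_2=L\varphi'(\delta)e-K(\bar y-x_0),\qquad e=\frac{\bar x-\bar y}{\delta}.
\]
The usual matrix inequality then holds, and testing it on $e\otimes e$ gives a large negative eigenvalue for $X-Y$. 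Consequently $\mathcal P^-(X-Y)\ge -4\Lambda K + c\lambda L|\varphi''(\delta)|$. In case (i) we have $|\varphi''(\delta)|\sim\delta^{-\beta}$, and in case (ii) $|\varphi''(\delta)|\sim\delta^{\tau-2}$.

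The core step is to bound $F(X,\bar x)-F(Y,\bar y)$ from above using the equation. The equation is satisfied at the gradients $\eta_1+\xi$ and $\eta_2+\xi$. Since $L\varphi'\ge L/2$ dominates $K$, each of $|\eta_i|$ is comparable to $L\varphi'(\delta)$.

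\emph{Case (ii).} We choose $L$ so large, relative to the bound $\mathcal J_0$ on $|\xi|$, that $|\eta_i+\xi|\ge |\eta_i|-\mathcal J_0\ge 1$. Then, using \eqref{guanxinormalization} together with monotonicity and $\sigma_1(t)\ge \sigma_1(1)$ for $t\ge1$,
\[
\Phi(|\eta_i+\xi|,x)\ge\sigma_1(|\eta_i+\xi|)\ge \sigma_1(1)>0,
\]
uniformly in $x$; the nonnegative term $a(x)\sigma_2$ can simply be dropped. Dividing the equation by $\Phi$ and using \eqref{A5} yields
\[
|F(X,\bar x)|\le \frac{\|f\|_\infty+\mathcal M\bigl(1+\sigma_1(|\eta_1+\xi|)\bigr)}{\sigma_1(|\eta_1+\xi|)}\le \frac{\|f\|_\infty+\mathcal M}{\sigma_1(1)}+\mathcal M .
\]
The key point is that the Hamiltonian bound is relative to $\sigma_1$, so its contribution stays bounded, and since $\Phi\ge\sigma_1$ the modulating coefficient $a$ causes no harm. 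The same bound holds for $F(Y,\bar y)$. Next, by \eqref{A2} and the fact that $\|X\|,\|Y\|$ are controlled through the Jensen--Ishii matrix bound,
\[
F(X,\bar x)-F(Y,\bar y)\le -\mathcal P^-(X-Y)+C_F\,\delta^{\theta}\,\|Y\|.
\]
Here $\|Y\|$ is at most $C L|\varphi''|+CL\varphi'/\delta$, and this must be absorbed. In case (ii) the term is of order $L\delta^{\theta+\tau-2}$ against the good term $\lambda L\delta^{\tau-2}$, so it is absorbable for $\delta$ small. The standard remedy for the $\varphi'/\delta$ part is to use the refined Ishii--Lions splitting, estimating the eigenvalue structure along $e$ and its orthogonal complement.

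\emph{Case (i).} Here $|\xi|\ge\mathcal J_0$ is large while $|\eta_i|\sim L$ is fixed, so $|\eta_i+\xi|\ge|\xi|-CL\ge1$ once $\mathcal J_0\ge CL+1$. This gives the same lower bound on $\Phi$ and the same bound on $F$, and we then run the Lipschitz argument with $\varphi(t)=t-\frac{1}{2-\beta}t^{2-\beta}$ and some $\beta\in(0,\theta)$ (so that the $\delta^{\theta}$ error is absorbable). This explains the ordering of the choices: first $L$ is chosen for the Lipschitz profile, then $\mathcal J_0=CL+1$, and then case (ii) reuses this $\mathcal J_0$ when choosing its own larger $L$.

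Combining these estimates gives $c\lambda L|\varphi''(\delta)|\le C(1+K)+C_F$-error terms, which is impossible for $L$ large. This contradiction yields the bound $|u(x)-u(x_0)|\le L\varphi(|x-x_0|)$, and hence the stated Lipschitz and $C^{0,\tau}$ estimates in $B_{1/2}$. The main obstacle I expect is handling the $x$-dependence of $F$ via \eqref{A2}. The difficulty is that \eqref{A2} gives only Hölder oscillation multiplied by $\|Y\|$, which can be as large as $L\varphi'/\delta$. To control $\|Y\|$ I would first try the sharpened matrix inequality (with the $\iota$-parameter of Ishii--Lions) to bound $\|X\|,\|Y\|$ by $C\,L|\varphi''(\delta)|$ plus lower-order terms. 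Failing that, I would pick a smooth auxiliary weight adapted to $\delta^{\theta}$ so that the error is absorbable.
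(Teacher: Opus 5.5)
Your handling of the degeneracy and the Hamiltonian matches the paper. You drop $a(x)\sigma_2\ge 0$ to get $\Phi\ge\sigma_1(|\eta_i+\xi|)\ge\sigma_1(1)$, use that \eqref{A5} is measured relative to $\sigma_1$, and take $\mathcal{J}_0$ to be a multiple of $L$ in (i).

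Your part (ii) takes a different but valid route. The paper reduces to Pucci inequalities on $\{|Du|\ge 3\mathcal{J}_0\}$ and cites \cite{Silvestre2016JEMS}. You instead rerun Ishii--Lions with $L\ge(\mathcal{J}_0+1+K)/\tau$, so the test gradients $\eta_i+\xi$ stay away from $0$. Since $\varphi(t)=t^\tau$ gives $\varphi'(\delta)/\delta\sim|\varphi''(\delta)|\sim\delta^{\tau-2}$, the oscillation error is harmless there. This is self-contained, and with a Lipschitz profile the same trick would even give a Lipschitz bound in (ii).

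The genuine gap is in part (i), in the term $C_F\delta^\theta\|Y\|$, which you flag but do not resolve.

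\emph{Your primary remedy cannot work.} The matrix inequality controls $Y$ only through $-Y\le A+cI$, and the two-sided version gives at best $\|Y\|\lesssim\|A\|$. But $A$ has eigenvalue $L\varphi'(\delta)/\delta\approx L/\delta$ on $e^{\perp}$. So $\|Y\|\lesssim L/\delta$, as in \eqref{coeficentshangjie}, is the best available, and the error is of size $C_FL\delta^{\theta-1}$.

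\emph{Your chosen exponent is wrong.} With $\varphi(t)=t-\frac{1}{2-\beta}t^{2-\beta}$ the good term is $\lambda(1-\beta)L\delta^{-\beta}$. Both terms are linear in $L$, so enlarging $L$ does not help. Moreover $\delta\to 0$ is forced by $L\varphi(\delta)\le 2$. The contradiction therefore needs $\delta^{-\beta}\gg\delta^{\theta-1}$, i.e.\ $\beta>1-\theta$. You chose $\beta\in(0,\theta)$, which fails for every $\theta\le\frac12$.

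\emph{The missing idea} is to strengthen the concavity of the profile rather than the bound on $\|Y\|$. The paper uses $w(s)=s-w_0s^{1+\beta}$ with $\beta\in(0,\theta)$. Then $|w''(\delta)|=w_0\beta(1+\beta)\delta^{\beta-1}$, and
\[
C_FL\delta^{\theta-1}=C_F\delta^{\theta-\beta}\cdot L\delta^{\beta-1}
\]
is absorbed once $\delta\le 2/\sqrt{L_2}$ is small, which is arranged by taking the localization constant $L_2$ large. Your fallback ``weight adapted to $\delta^\theta$'' is exactly this, and it has to be the argument, not a contingency.

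\emph{Signs.} The sub/supersolution inequalities give only the one-sided bounds $F(X,\bar x)\ge -C_0$ and $F(Y,\bar y)\le C_0$, not $|F(X,\bar x)|\le C_0$. Ellipticity gives $F(X,\bar x)-F(Y,\bar x)\le P^{+}_{\lambda,\Lambda}(X-Y)$. This is very negative because $\langle (X-Y)e,e\rangle\le 4L\varphi''(\delta)+O(K)$. Your $\mathcal{P}^-(X-Y)$ should read $\mathcal{P}^-(Y-X)$.
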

\begin{proof}
	(i) To prove the local Lipschitz regularity of $u$, let us fix $0<r<1$ and consider the quantity
	\begin{equation*}
		\mathcal{G}(x_{0}):=\sup\limits_{(x,y)\in B_{r}\times B_{r}}\left\{u(x)-u(y)-L_{1}w(\abs{x-y})-L_{2}\Big(\lvert x-x_{0}\rvert^{2}+\lvert y-x_{0} \rvert^{2} \Big)\right\}
	\end{equation*}
	for each $x_{0}\in B_{r/2}$, where constants $L_{1},L_{2}>1$ and
	$$
	w(s)=\left\{
	\begin{array}{lcl}
		s-w_{0}s^{1+\beta}& \text{if} & 0\leq s\leq s_{0}:=\left(\frac{1}{(1+\beta)w_{0}}\right)^{1/\beta}, \\
		w(s_{0}) &\text{if}&  s>s_{0},
	\end{array}
	\right.
	$$
	with $\beta\in(0,\theta)$ and $w_{0}\in\left(0,\frac{1}{1+\beta}\right)$. Notice that $s_{0}\geq 1$ and 
	$$w(s)\geq 0,\quad 0\leq w^{\prime}(s)\leq 1,\quad w^{\prime\prime}(s)\leq 0\quad {\rm for\; any\;} s\geq 0.$$ 
	If we prove that there exist two constants $L_{1}, L_{2} >1$ such that
	\begin{equation}\label{goal}
		\mathcal{G}(x_{0})\leq 0
	\end{equation}
	for all $x_{0} \in B_{r/2}$, then the desired result is established. We will argue by contradiction; for this reason, suppose that for all $L_{1},L_{2}>1$, there is $\hat{x}_{0}\in B_{r/2}$ for which $\mathcal{G}(\hat{x}_{0})>0$.
	
	We introduce two auxiliary functions $\psi,\Psi:\overline{B}_{r}\times\overline{B}_{r}\rightarrow \mathbb{R}$, defined by
	\begin{equation*}
		\begin{cases}
			\psi(x,y):=L_{1}w(\abs{x-y})+L_{2}\Big(\lvert x-\hat{x}_{0}\rvert^{2}+\lvert y-\hat{x}_{0} \rvert^{2} \Big),\\
			\Psi\left(x,y\right):=u(x)-u(y)-\psi(x,y).
		\end{cases}
	\end{equation*}
Let $\left(\hat{x},\hat{y}\right)\in \overline{B}_{r}\times\overline{B}_{r}$ be a maximum point of $\Psi(x,y)$, i.e., $\Psi\left(\hat{x},\hat{y}\right)= \mathcal{G}(\hat{x}_{0})>0$. We first observe that $\hat{x}\neq\hat{y}$, otherwise the maximum of $\Psi$ would be nonpositive. Then it follows from $\|u\|_{L^{\infty}(B_{1})}\leq 1$ that
	\begin{equation}\label{31}
		L_{1}w(\abs{x-y})+L_{2}\Big(\lvert \hat{x}-\hat{x}_{0}\rvert^{2}+\lvert \hat{y}-\hat{x}_{0} \rvert^{2} \Big)<u(\hat{x})-u(\hat{y})\leq 2\|u\|_{L^{\infty}(B_{1})}\leq 2.
	\end{equation}
	 Now we choose $L_{2}\geq \frac{32}{r^{2}}$. With such choice, it follows from \eqref{31} that
	\begin{equation}\label{neidian}
		\lvert \hat{x}-\hat{x}_{0}\rvert,\lvert \hat{y}-\hat{x}_{0} \rvert\leq \frac{r}{4},\quad \abs{\hat{x}-\hat{y}}\leq \sqrt{2\left(\lvert \hat{x}-\hat{x}_{0}\rvert^{2}+\lvert \hat{y}-\hat{x}_{0} \rvert^{2} \right)}\leq \frac{2}{\sqrt{L_{2}}}<1.
	\end{equation}
	This along with $\hat{x}_{0}\in B_{r/2}$ implies that $(\hat{x},\hat{y})$ is interior point of $B_{r}\times B_{r}$. 		

 Next, we invoke the Crandall-Ishii-Lions lemma (see \cite[Theorem 3.2]{Crandle1}, \cite[Proposition 2.1]{Fili}) to assure
the existence of a limiting subjet $\left(\xi_{\hat{x}},X\right)$ of $u$ at $\hat{x}$ and a limiting superjet $\left(\xi_{\hat{y}},Y\right)$ of $u$ at $\hat{y}$, where
\begin{equation}\label{4aasubsuperjet}
	\xi_{\hat{x}}:=L_{1}w^{\prime}(\abs{\hat{x}-\hat{y}})\frac{\hat{x}-\hat{y}}{\abs{\hat{x}-\hat{y}}}+2L_{2}(\hat{x}-\hat{x}_{0}), \quad  \xi_{\hat{y}}:=L_{1}w^{\prime}(\abs{\hat{x}-\hat{y}})\frac{\hat{x}-\hat{y}}{\abs{\hat{x}-\hat{y}}}-2L_{2}(\hat{y}-\hat{x}_{0})
\end{equation}
such that the matrices $X,Y\in S^{d}$ satisfy the matrix inequality
	\begin{equation}\label{matrix2}
		\left(
		\begin{array}{cc}
			X & 0 \\
			0 & -Y \\
		\end{array}
		\right)
		\leq  \left(
		\begin{array}{cc}
			A & -A \\
			-A & A \\
		\end{array}
		\right)+
		(2L_{2}+\epsilon)
		\left(
		\begin{array}{cc}
			I & 0 \\
			0 & I \\
		\end{array}
		\right)
	\end{equation}
for
\begin{equation}\label{4aajuzhena}
	A:=L_{1}\left[\frac{w^{\prime}(\abs{\hat{x}-\hat{y}})}{\abs{\hat{x}-\hat{y}}}I+\left(w^{\prime\prime}(\abs{\hat{x}-\hat{y}})-\frac{w^{\prime}(\abs{\hat{x}-\hat{y}})}{\abs{\hat{x}-\hat{y}}}\right)\frac{(\hat{x}-\hat{y})\otimes(\hat{x}-\hat{y})}{\abs{\hat{x}-\hat{y}}^{2}}\right]
\end{equation}
and $\epsilon\in (0,1)$, that only depends on the norm of $A$ and can be made sufficiently small. Furthermore, we have the following viscosity inequalities
	\begin{equation}\label{solution1}
		\Phi(\abs{\xi_{\hat{x}}+\xi},\hat{x})F(X, \hat{x})+H(\xi_{\hat{x}}+\xi,\hat{x})\geq f(\hat{x}),
	\end{equation}
	\begin{equation}\label{solution2}
			\Phi(\abs{\xi_{\hat{y}}+\xi},\hat{y})F(Y,\hat{y})+H(\xi_{\hat{y}}+\xi,\hat{y})\leq f(\hat{y}).
	\end{equation}
Utilizing the uniform ellipticity of operator $F$ at the point $\hat{x}$ and $\eqref{A2}$ to obtain
\begin{equation}\label{uniformx}
	\begin{split}
		F(X,\hat{x})\leq& P_{\lambda,\Lambda}^{+}(X-Y)+F(Y,\hat{y})+F(Y,\hat{x})-F(Y,\hat{y})\\
		\leq& P_{\lambda,\Lambda}^{+}(X-Y)+F(Y,\hat{y})+C_{F}\|Y\|\abs{\hat{x}-\hat{y}}^{\theta}.
	\end{split}
\end{equation}
Combining \eqref{uniformx} with \eqref{solution1} and \eqref{solution2}, we arrive at
\begin{equation}\label{bds}
	J:=\frac{f(\hat{x})-H(\xi_{\hat{x}}+\xi,\hat{x})}{\Phi(\abs{\xi_{\hat{x}}+\xi},\hat{x})}-\frac{f(\hat{y})-H(\xi_{\hat{y}}+\xi,\hat{y})}{\Phi(\abs{\xi_{\hat{y}}+\xi},\hat{y})}
	\leq P_{\lambda,\Lambda}^{+}(X-Y) +C_{F}\|Y\|\abs{\hat{x}-\hat{y}}^{\theta}.
\end{equation}
{\bf Esimate of $P_{\lambda,\Lambda}^{+}(X-Y)$}. We employ the matrix inequality \eqref{matrix2} to vectors of the form $(z,z)\in\mathbb{R}^{2d}$ with $\abs{z}=1$ and obtain
	\begin{equation}\label{superasati}
		\left\langle(X-Y)z,z\right\rangle\leq 4L_{2}+2\epsilon.
	\end{equation}
	This means that all the eigenvalues of $X-Y$ are less than or equal to $4L_{2}+2\epsilon$. In particular, applying \eqref{matrix2} to vector of the form $(\upsilon,-\upsilon)\in\mathbb{R}^{2d}$, where $\upsilon$ is given by $\upsilon:=\frac{\hat{x}-\hat{y}}{\abs{\hat{x}-\hat{y}}}$, we have 
	\begin{equation*}
		\left\langle(X-Y)\upsilon,\upsilon\right\rangle\leq 4L_{1}w^{\prime\prime}(\abs{\hat{x}-\hat{y}})+4L_{2}+2\epsilon.
	\end{equation*}	
	This yields that
	at least one eigenvalue of $X-Y$ is less than $4L_{1}w^{\prime\prime}(\abs{\hat{x}-\hat{y}})+4L_{2}+2\epsilon$. We choose $L_{1}>\frac{2L_{2}+1}{2\beta(1+\beta)w_{0}}$, it follows from $\abs{\hat{x}-\hat{y}}<1$ and $\beta<\theta<1$ that
	\begin{align*}
		4L_{1}w^{\prime\prime}(\abs{\hat{x}-\hat{y}})+4L_{2}+2\epsilon&=-4L_{1}\beta(\beta+1)w_{0}\abs{\hat{x}-\hat{y}}^{\beta-1}+4L_{2}+2\epsilon\\
		&<-4L_{1}\beta(\beta+1)w_{0}+4L_{2}+2<0.
	\end{align*}
	This means that at least one eigenvalue of
	$X-Y$ is negative. Based on the above analysis and the definition of Pucci extremal operator, we arrive at
	\begin{equation}\label{puccishangjie}
		P_{\lambda,\Lambda}^{+}(X-Y)\leq \left(\lambda+\Lambda(d-1)\right)(4L_{2}+2\epsilon)-4L_{1}\lambda\beta(\beta+1)w_{0}\abs{\hat{x}-\hat{y}}^{\beta-1}.
	\end{equation}
	{\bf Esimate of $\|Y\|$}. 
	Applying \eqref{matrix2} to the vectors $(0,z)\in\mathbb{R}^{2d}$ with $\abs{z}=1$,
	we get
	\begin{equation}\label{4gujifanshuuse}
		\left\langle-Yz,z\right\rangle\leq \left\langle Az,z\right\rangle+2L_{2}+\epsilon.
	\end{equation}
It follows from the definition of the matrix $A$ in \eqref{4aajuzhena} that $$A\leq L_{1}\frac{w^{\prime}(\abs{\hat{x}-\hat{y}})}{\abs{\hat{x}-\hat{y}}}I.$$ 
This together with \eqref{4gujifanshuuse} and $0\leq w^{\prime}(s)\leq 1$ for $s\geq 0$ yields that
	\begin{equation}\label{coeficentshangjie}
		\|Y\|\leq L_{1}\abs{\hat{x}-\hat{y}}^{-1}+2L_{2}+\epsilon.
	\end{equation}
{\bf Esimate of $J$}. Set $\mathcal{J}_{0}:=4L_{1}>1$ for $L_{1}$ to be fixed later and suppose $\abs{\xi}\geq \mathcal{J}_{0}$. 
We choose $L_{1}>L_{2}$, then it follows from $\Abs{\hat{x}-x_{0}}< \frac{1}{4}$ and $0\leq w^{\prime}(s)\leq 1$ for $s\geq 0$ that
	\begin{equation*}
		\abs{\xi_{\hat{x}}}\leq L_{1}w^{\prime}(\abs{\hat{x}-\hat{y}})+2L_{2}\Abs{\hat{x}-x_{0}}< 2L_{1}.
	\end{equation*}
	Then it follows that
	\begin{align*}
		\abs{\xi_{\hat{x}}+\xi}\geq\abs{\xi}- \abs{\xi_{\hat{x}}}\geq \mathcal{J}_{0}-2L_{1}=2L_{1}>1.
	\end{align*}
	In exactly the same way, we get
	\begin{equation*}
		\abs{\xi_{\hat{y}}+\xi}\geq 2L_{1}> 1.
	\end{equation*}
	Since $\sigma_{1}$ and $\sigma_{2}$ are nondecreasing functions, combining the last two displays with \eqref{A4}, we arrive at 
	\begin{equation}\label{yjie}
		\Phi(\abs{\xi_{\hat{x}}+\xi},\hat{x})\geq \sigma_{1}(\abs{\xi_{\hat{x}}+\xi})\geq \sigma_{1}(1),\quad 
		\Phi(\abs{\xi_{\hat{y}}+\xi},\hat{y})\geq \sigma_{1}(\abs{\xi_{\hat{y}}+\xi})\geq \sigma_{1}(1).
	\end{equation}
	A combination of \eqref{yjie} with the assumption \eqref{A5} yields that
	\begin{equation*}
		\begin{split}
			J&\geq -\frac{\|f\|_{L^{\infty}(B_{1})}+\mathcal{M}\left(1+\sigma_{1}(\abs{\xi_{\hat{x}}+\xi})\right)}{\sigma_{1}(\abs{\xi_{\hat{x}}+\xi})}-\frac{\|f\|_{L^{\infty}(B_{1})}+\mathcal{M}\left(1+\sigma_{1}(\abs{\xi_{\hat{y}}+\xi})\right)}{\sigma_{1}(\abs{\xi_{\hat{y}}+\xi})}\\
			&\geq -2\mathcal{M}-\frac{2(1+\|f\|_{L^{\infty}(B_{1})})}{\sigma_{1}(1)}. 
		\end{split}
	\end{equation*}
	Substituting the above estimate, \eqref{puccishangjie} and \eqref{coeficentshangjie}  into \eqref{bds}, and using the fact $\abs{\hat{x}-\hat{y}}^{\theta}<1$, we get
	\begin{equation*}
		-2\mathcal{M}-\frac{2(1+\|f\|_{L^{\infty}(B_{1})})}{\sigma_{1}(1)}
		\leq C_{1}+
		L_{1}\abs{\hat{x}-\hat{y}}^{\beta-1}\left(C_{F}\abs{\hat{x}-\hat{y}}^{\theta-\beta}-4\lambda w_{0}\beta(1+\beta)\right),
	\end{equation*}
wrehe $C_{1}:=\left(\Lambda(d-1)+\lambda\right)(4L_{2}+2)+C_{F}(2L_{2}+1)$.
Selecting $L_{2}\geq 4\left(\frac{C_{F}}{2\lambda w_{0}\beta(1+\beta)}\right)^{2/(\theta-\beta)}$. We exploit \eqref{neidian} and $\beta<\theta$ to derive
	\begin{equation*}
		C_{F}\Abs{\hat{x}-\hat{y}}^{\theta-\beta}\leq C_{F}\left(\frac{2}{\sqrt{L_{2}}}\right)^{\theta-\beta}\leq 2\lambda w_{0}\beta(1+\beta).
	\end{equation*}
	This along with  $\abs{\hat{x}-\hat{y}}^{\beta-1}>1$ yields that
	\begin{equation}\label{44contradiction}
		-2\mathcal{M}-\frac{2}{\sigma_{1}(1)}(1+\|f\|_{L^{\infty}(B_{1})})<C_{1}-2L_{1}\lambda w_{0}\beta(1+\beta).
	\end{equation}
	As a consequence, by choosing $L_{1}\geq \frac{C_{1}+2\mathcal{M}}{2\lambda w_{0}\beta(1+\beta)}+\frac{1+\|f\|_{L^{\infty}(B_{1})}}{\lambda w_{0}\beta(1+\beta)\sigma_{1}(1)}$, we reach a contradiction with \eqref{44contradiction}.\\
	(ii) Suppose $\abs{\xi}< \mathcal{J}_{0}$ with $\mathcal{J}_{0}$ as in (i). Let $p\in\mathbb{R}^{d}$ such that $\abs{p}\geq 3\mathcal{J}_{0}$. Then it follows that $\abs{p+\xi}\geq 2\mathcal{J}_{0}>1$, which together with \eqref{A3}-\eqref{A5} yields that  $$\Phi(\abs{p+\xi},x)=\sigma_{1}(\abs{p+\xi})+a(x)\sigma_{2}(\abs{p+\xi})\geq \sigma_{1}(1),$$
	$$\Abs{\frac{{H({p+\xi},x)}}{\Phi(\abs{p+\xi},x)}}\leq \frac{\mathcal{M}(1+\sigma_{1}(\abs{p+\xi}))}{\sigma_{1}(\abs{p+\xi})}\leq \mathcal{M}\left(1+\frac{1}{\sigma_{1}(1)}\right).$$
	Therefore, we can see that $u$ is a viscosity solution of
	\begin{equation*}
		\begin{cases}
			P^{+}_{\lambda,\Lambda}(D^{2}u)+\mathcal{M}\left(1+\frac{1}{\sigma_{1}(1)}\right)+\frac{\|f\|_{L^{\infty}(B_{1})}}{\sigma_{1}(1)}\geq 0\quad\quad {\rm in} \;\; B_{1}\cap\left\{|Du|\geq 3\mathcal{J}_{0}\right\}, \\
			P^{-}_{\lambda,\Lambda}(D^{2}u)-\mathcal{M}\left(1+\frac{1}{\sigma_{1}(1)}\right)-\frac{\|f\|_{L^{\infty}(B_{1})}}{\sigma_{1}(1)}\leq 0 \quad\quad {\rm in} \;\; B_{1}\cap\left\{|Du|\geq 3\mathcal{J}_{0}\right\}.
		\end{cases}
	\end{equation*}	
		At this point, we are in an exact position to apply \cite[Theorem 1.1]{Silvestre2016JEMS} to know that $u$ is local H\"{o}lder continuous. This completes the proof.	
\end{proof}
\subsection{Tangential analysis}
The compactness stemming from the former result unlocks a new approximation lemma, instrumental in our analysis. This is the content of the following lemma.
\begin{lemma}[{\bf Approximation Lemma}]\label{lem4.1}
	 Suppose that the assumptions \eqref{A1}-\eqref{A6} and \eqref{phiform1} hold. Let $\sigma_{1}+a(\cdot)\sigma_{2}\in\Gamma$, where
	 $\Gamma$ is a collection of non-collapsing moduli of continuity. Let $\xi\in \rn$ be an arbitrarily vector and  $u\in C(B_{1})$ be a viscosity solution of to \eqref{4raodongfangcheng} with $\|u\|_{L^{\infty}(B_{1})}\leq 1$.
For any $\varepsilon>0$, there exists $\delta=\delta(d,\lambda,\Lambda,\varepsilon,\Gamma)\in (0,1)$ such that if
	\begin{equation*}
	\max\left\{\|{\rm osc}_{{F}}\|_{L^{\infty}\left(B_{1}\right)},\|f\|_{L^{\infty}(B_{1})},\mathcal{M} \right\}\leq \delta,
	\end{equation*}
then there exists a $F$-harmonic function $h\in C^{1,\alpha_{0}}_{\rm loc}(B_{3/4})$ (i.e., $h$ is a viscosity solution of $F(D^{2}h)=0$) such that
	 \begin{equation*}
	 	\|u-h\|_{L^{\infty}(B_{1/2})}\leq \varepsilon.
	 \end{equation*}
\end{lemma}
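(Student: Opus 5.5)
We argue by contradiction combined with compactness. Suppose the lemma fails for some $\varepsilon_0>0$. Then there are sequences $\{F_j\}$, $\{\sigma_{1,j}+a_j\sigma_{2,j}\}\subset\Gamma$, $\{H_j\}$, $\{f_j\}$, $\{\xi_j\}\subset\rn$ and viscosity solutions $u_j$ of
\[
\Phi_j(|Du_j+\xi_j|,x)F_j(D^2u_j,x)+H_j(Du_j+\xi_j,x)=f_j(x)\quad\text{in } B_1,
\]
with $\|u_j\|_{L^\infty(B_1)}\le1$ and $\max\{\|{\rm osc}_{F_j}\|_{L^\infty(B_1)},\|f_j\|_{L^\infty(B_1)},\mathcal{M}_j\}\le 1/j$. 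Yet $\|u_j-h\|_{L^\infty(B_{1/2})}>\varepsilon_0$ for every $F_j$-type harmonic $h$. Proposition \ref{4aajinxingjieguo2} gives a uniform H\"older (or Lipschitz) bound on $B_{1/2}$, since its constants stay bounded once $\mathcal{M}_j,\|f_j\|_\infty\le1$. The dependence on $\sigma_1(1)$ is controlled through non-collapsing: $\sigma_{1,j}(1)$ cannot tend to $0$, because otherwise $1\to0$. The same applies on slightly larger balls by scaling/covering, e.g. $B_{3/4}$. By Arzel\`a--Ascoli we extract $u_j\to u_\infty$ locally uniformly. Uniform ellipticity lets us also assume $F_j(\cdot,0)\to F_\infty$ locally uniformly on $S^d$, with $F_\infty$ $(\lambda,\Lambda)$-elliptic. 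Since ${\rm osc}_{F_j}\to0$, $F_j(M,x)\to F_\infty(M)$ uniformly in $x$ on bounded sets of $M$.

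Next we show that $u_\infty$ solves $F_\infty(D^2u_\infty)=0$ in viscosity sense in $B_{3/4}$. We split according to $\{\xi_j\}$. If $|\xi_j|\to\infty$ along a subsequence, we take a quadratic polynomial $P$ touching $u_\infty$ from below at $x_0$. Then $P$ plus small perturbations touches $u_j$ at $x_j\to x_0$. There $|DP(x_j)+\xi_j|\ge1$ for large $j$, so $\Phi_j\ge\sigma_{1,j}(1)\ge c>0$ by non-collapsing. Dividing gives $F_j(D^2P,x_j)\le (f_j-H_j)/\Phi_j\to0$, since $|H_j|\le\mathcal{M}_j(1+\sigma_{1,j})$ and $(1+\sigma_{1,j})/\sigma_{1,j}$ is bounded. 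Passing to the limit yields the supersolution inequality; the subsolution case is symmetric.

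If instead $\xi_j\to\xi_\infty$ is bounded, we follow the Imbert--Silvestre argument. This is the delicate step. We first show the Hamiltonian is harmless: $|H_j|\le \mathcal{M}_j(1+\sigma_{1,j}(|p|))\to0$ uniformly on bounded sets. We then prove $F_\infty(D^2P)\ge0$ for $P$ touching below. Assume instead $F_\infty(D^2P(x_0))<0$; by ellipticity we may take $P$ strictly quadratic with $M=D^2P$, and consider $|D P(x_0)+\xi_\infty|$. If it is nonzero, $\Phi_j(|DP(x_j)+\xi_j|)$ is bounded below by non-collapsing: if $\Phi_j(a_j)\to0$ then $a_j\to0$, which is a contradiction. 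Dividing and passing to the limit then gives the claim. If it vanishes, we use the standard trick of touching $u_j$ by $P(x)+\langle e,x\rangle$ for small vectors $e$, chosen so that $DP(x_j)+e+\xi_j\neq0$ and the gradient stays away from zero. The gradient-vanishing case is where the real work lies; the fallback is the Imbert--Silvestre lemma that $F(D^2u)=0$ holds whenever it holds at all test functions with nonvanishing gradient, after ruling out the degenerate touching by sliding the affine part.

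With $u_\infty$ $F_\infty$-harmonic in $B_{3/4}$, hence in $C^{1,\alpha_0}_{\rm loc}$ by Caffarelli's theory, we set $h=u_\infty$. Then $\|u_j-h\|_{L^\infty(B_{1/2})}\to0$, contradicting the choice of $\varepsilon_0$. We expect the main obstacle to be the uniform treatment, through non-collapsing, of the degeneracy $\Phi_j\to0$ near points where the shifted gradient vanishes.
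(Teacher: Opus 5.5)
Your architecture matches the paper's. It uses contradiction plus compactness from Proposition~\ref{4aajinxingjieguo2}; your use of non-collapsing to keep its constants uniform in $j$ is a point the paper leaves implicit. It then splits according to whether $\{\xi_j\}$ is bounded, uses the non-collapsing lower bound on $\Phi_j$ whenever the shifted gradient stays away from zero, and ends with Caffarelli's theory.

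\textbf{The gap.} The gap is the degenerate case $DP(x_0)+\xi_\infty=0$, which is the only nontrivial part of the lemma. Normalize, as the paper does, $x_0=0$ and $P(x)=\frac12\langle Mx,x\rangle+\langle b,x\rangle$. Adding $\langle e,x\rangle$ to $P$ moves the contact point to some $x_j^e$. The quantity entering the equation is then $Mx_j^e+b+e+\xi_j\to Mx^e+e$, not $DP(x_j)+e+\xi_j$. Since $x^e$ depends on $e$ in an uncontrolled way, nothing in your argument rules out $Mx^e=-e$. So choosing $e$ ``so that the gradient stays away from zero'' is unjustified.

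\textbf{The missing idea.} The paper, following Imbert--Silvestre, pins the contact point before sliding. Assume $F_\infty(M)>0$, so $M$ has a nontrivial positive eigenspace $T$, and touch $u_j$ with $P+\gamma\abs{P_T(x)}$.
\begin{itemize}
\item If the contact point has $P_T(x_j^\gamma)=0$, it lies in the $M$-invariant subspace $T^\perp$. Then for every unit $e\in T$ the function $P+\gamma\langle e,P_T(x)\rangle$ touches $u_j$ at the \emph{same} point, and its shifted gradient has $T$-component $\gamma e+o(1)$. Only here is sliding an affine term legitimate.
\item If $P_T(x_j^\gamma)\neq0$, the component of the shifted gradient along $\zeta=P_T(x_j^\gamma)/\abs{P_T(x_j^\gamma)}$ is at least $\gamma+o(1)$, because $M$ is positive definite on $T$. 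The extra Hessian $\gamma D^2\abs{P_T(x)}\ge0$ is absorbed by ellipticity.
\end{itemize}
Either way the shifted gradient is at least $\gamma/2$ for large $j$, non-collapsing applies, and $F_\infty(M)\le0$, a contradiction.

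\textbf{Your fallback.} It would also close the argument, and more economically than the paper, since no uniformity in $j$ is needed. Your nondegenerate step shows that $v=u_\infty+\langle\xi_\infty,x\rangle$ satisfies the $F_\infty$-inequalities for every test function with nonzero gradient at the contact point. The Imbert--Silvestre lemma then upgrades this to $F_\infty(D^2v)=0$, hence $F_\infty(D^2u_\infty)=0$. But that lemma is proved precisely by the $\abs{P_T(x)}$ device above, not by ``sliding the affine part''. As written, the hardest step is neither proved nor correctly described.

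Pure linear sliding can be rescued at the level of $u_\infty$, but only with an extra argument. Suppose that for every small $e$ the contact point satisfied $Mx^e=-e$. Then $M$ would be invertible, and $u_\infty-P$ would have an affine minorant of slope $-My$ at every $y$ near $0$. Monotonicity of these slopes forces $M\le0$, hence $M<0$ and $F_\infty(M)<0$.

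\textbf{Two smaller corrections.}
\begin{itemize}
\item In the bounded case you reversed the sign. With the paper's convention, and in your own unbounded case, a lower test function gives $F_\infty(D^2P)\le0$. So the contradiction hypothesis must be $F_\infty(M)>0$; this is exactly what produces the positive eigenspace $T$ and lets the convex perturbation push the inequality the right way.
\item The claim $\mathcal{M}_j(1+\sigma_{1,j})\to0$ on bounded sets is not available, since the lemma assumes no upper bound on the members of $\Gamma$. As in your unbounded case, divide by $\Phi_j$ and use $\sigma_{1,j}\le\Phi_j$ together with the non-collapsing lower bound.
\end{itemize}
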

\begin{proof}
The proof resorts to a contradiction argument. Suppose that the result does not hold. Then there exist $\varepsilon_{0}>0$ and sequences  $\{F_{j}\}_{j\in \mathbb{N}}$, $\{\Phi_{j}\}_{j\in \mathbb{N}}$, $\{H_{j}\}_{j\in \mathbb{N}}$, $\{\xi_{j}\}_{j\in \mathbb{N}}$, $\{f_{j}\}_{j\in \mathbb{N}}$, $\{u_{j}\}_{j\in \mathbb{N}}$ such that
	\begin{itemize}
		\item   [{\rm$({{\rm i}})$}] the operator $F_{j}:S^{d}\times B_{1}\rightarrow \mathbb{R}$ is uniformly $(\lambda,\Lambda)$-elliptic with $\|{\rm osc}_{{F_{j}}}\|_{L^{\infty}\left(B_{1}\right)}\leq \frac{1}{j}$;
		\item [{\rm$({{\rm ii}})$}] $\Phi_{j}(t,x):=\sigma_{1}^{j}(t)+a_{j}(x)\sigma_{2}^{j}(t)$, where $\sigma_{i}^{j}(\cdot)$ ($i=1,2$) are moduli of continuity, and $0\leq a_{j}(\cdot)\in C(B_{1})$. Moreover, if $\sigma_{1}^{j}(b_{j})+a_{j}(x)\sigma_{2}^{j}(b_{j})\rightarrow 0$, then $b_{j}\rightarrow 0$;
		\item [ {\rm$({{\rm iii}})$}] $f_{j}\in C({B_{1}})$ with $\|f_{j}\|_{L^{\infty}(B_{1})}\leq\frac{1}{j}$;
		\item [ {\rm$({{\rm iv}})$}] the Hamiltonian term fulfills $\Abs{H_{j}(t,x)}\leq \mathcal{M}_{j}(1+\sigma_{1}^{j}(\abs{t}))$ with $\mathcal{M}_{j}\leq\frac{1}{j}$;
		\item [ {\rm$({{\rm v}})$}] $u_{j}\in C({B_{1}})$ with $ \|u_{j}\|_{L^{\infty}(B_{1})}\leq 1$ is a viscosity solution of
		\begin{equation}\label{model333}
			\Phi_{j}(|Du_{j}+\xi_{j}|,x)F_{j}(D^2 u_{j}, x)+H_{j}(Du_{j}+\xi_{j},x) =f_{j}(x) \quad  \text{in} \quad B_{1}.
		\end{equation}
	\end{itemize}
	Nonetheless, for any $F$-harmonic function $h\in C^{1,\alpha_{0}}_{\rm loc}\left(B_{3/4}\right)$, it holds
	 \begin{equation}\label{daomaodun}
		\|u_{j}-h\|_{L^{\infty}(B_{1/2})}>\varepsilon_{0}\quad {\rm for\; any \;} j\in \mathbb{N}.
	\end{equation}
	
	Since ${F_{j}}$ are uniformly $(\lambda,\Lambda)$-elliptic, they are also Lipschitz continuous in $M$. Thus, it follows from (i) and Arzel${\rm \grave{a}}$-Ascoli theorem that there exists some uniformly $(\lambda,\Lambda)$-elliptic
	operator $F_{\infty}$ (with frozen coefficients) such that $F_{j}\rightarrow F_{\infty}$ locally uniformly in $S^{d}$ for all $x\in B_{1}$ fixed, through a subsequence if necessary. In addition, we know from Proposition \ref{4aajinxingjieguo2} that the sequence $\{u_{j}\}_{j\in\mathbb{N}}\subset C_{\rm loc}^{0,\tau}(B_{1})$ for some $\tau\in (0,1)$. Therefore, by applying Arzel${\rm \grave{a}}$-Ascoli theorem again, we conclude that, up to a subsequence,  $u_{j}$ converges locally uniformly in $B_{1}$ to some continuous function $u_{\infty}$ in the $C^{0}$-topology. 

In the sequel, our goal is to verify that the limiting function $u_{\infty}$ is a viscosity solution to the homogeneous equation
	 \begin{equation}\label{homojie}
		F_{\infty}(D^{2}v)=0 \quad {\rm in}\quad  B_{3/4}.
	\end{equation}
We only show that $u_{\infty}$ is a viscosity supersolution, as its subsolution counterpart is entirely analogous. Let $\varphi$ be any test function touching $u_{\infty}$ from below at a point $\overline{x}\in B_{3/4}$, that is,
	$$\varphi(\overline{x})=u_{\infty}(\overline{x})\quad {\rm and}\quad \varphi(x)<u_{\infty}(x)\quad {\rm for\; all}\;\;x\neq \overline{x}.$$
	Without loss of generality, we assume that $\abs{\overline{x}}=u_{\infty}(\overline{x}) = 0$ and $\varphi$ is a quadratic polynomial, namely,
	$$\varphi(x)=\frac{1}{2}\left\langle Mx,x\right\rangle+\left\langle b,x\right\rangle.$$
	Since $u_{j}\rightarrow u_{\infty}$ locally uniformly in $B_{1}$, we see that, for $j$ sufficiently large, the polynomial
	$$\varphi_{j}(x):=\frac{1}{2}\left\langle M(x-x_{j}),x-x_{j}\right\rangle+\left\langle b,x-x_{j}\right\rangle+u_{j}(x_{j})$$
	touches $u_{j}$ from below at $x_{j}$ belonging to a small neighbourhood of zero. Since $u_{j}$ is a viscosity solution of \eqref{model333}, we immediately obtain that
	\begin{equation}\label{sj}
		\Phi_{j}(\abs{b+\xi_{j}},x_{j})F_{j}(M, x_{j})+H_{j}(b+\xi_{j},x_{j}) \leq f_{j}(x_{j}).
	\end{equation}

If the sequence $\{\xi_{j}\}_{j\in \mathbb{N}}$ is unbounded, then we may assume $\abs{\xi_{j}}\rightarrow {\infty}$ as $j\rightarrow\infty$ (up to a subsequence). As a result, there exists $j^{\star}\in\mathbb{N}$ so large that $\abs{\xi_{j}}\geq2\max\{1,\abs{b}\}$ for all $j\geq j^{\star}$. By the triangle inequality, we get
\begin{equation*}
	\abs{b+\xi_{j}}\geq \abs{\xi_{j}}-|b|\geq \frac{1}{2}\abs{\xi_{j}}\geq 1.
\end{equation*}
This along with the assumptions (ii)-(iv) yields that
\begin{equation}\label{f1}
	\frac{f_{j}(x_{j})}{\Phi_{j}(\abs{b+\xi_{j}},x_{j})}\leq  \frac{\|f_{j}\|_{L^{\infty}(B_{1})}}{\sigma_{1}^{j}(1)}\leq \frac{1}{j\sigma_{1}^{j}(1)},
\end{equation}
\begin{equation}\label{hjie}
		\frac{H_{j}\left(b+\xi_{j},x_{j}\right)}{\Phi_{j}(\abs{b+\xi_{j}},x_{j})}\leq \frac{\mathcal{M}_{j}\left(1+\sigma_{1}^{j}(\abs{b+\xi_{j}})\right)}{\sigma_{1}^{j}(\abs{b+\xi_{j}})}\leq \frac{1}{j}\left(1+\frac{1}{\sigma_{1}^{j}(1)}\right).
\end{equation}
A combination of \eqref{sj} with \eqref{f1} and \eqref{hjie} leads to that
\begin{equation*}
	\begin{split}
		F_{\infty}(M)=\lim\limits_{j\rightarrow \infty}F_{j}(M,x_{j})
		\leq \lim\limits_{j\rightarrow \infty}\left(\Abs{\frac{f_{j}(x_{j})}{\Phi_{j}(\abs{b+\xi_{j}},x_{j})}}+ \Abs{\frac{H_{j}\left(b+\xi_{j},x_{j}\right)}{\Phi_{j}(\abs{b+\xi_{j}},x_{j})}}\right)= 0.
	\end{split}
\end{equation*}

On the other hand, if the sequence $\{\xi_{j}\}_{j\in\mathbb{N}}$ is bounded, then we may assume  $\xi_{j}\rightarrow \xi_{\infty}$ as $j\rightarrow \infty$ (up to a subsequence). As a
consequence, $\xi_{j}+b\rightarrow \xi_{\infty}+b \; {\rm as}\; j\rightarrow \infty.$ At this point, we consider two cases: $\abs{b+\xi_{\infty}}\neq 0$ or $\abs{b+\xi_{\infty}}= 0$. 

In the case of $\abs{b+\xi_{\infty}}>0$. According to (ii), we know that $\Phi(\abs{b+\xi_{j}},x_{j})\nrightarrow 0$ as $j\rightarrow \infty$. Then it follows from $\|f_{j}\|_{L^{\infty}(B_{1})}\leq\frac{1}{j}$ and $\mathcal{M}_{j}\leq \frac{1}{j}$ that
\begin{equation*}
	F_{\infty}(M)=\lim\limits_{j\rightarrow \infty}F_{j}(M,x_{j})\leq\lim\limits_{j\rightarrow \infty}\left(\Abs{\frac{f_{j}(x_{j})}{\Phi_{j}(\abs{b+\xi_{j}},x_{j})}}+ \Abs{\frac{H_{j}\left(b+\xi_{j},x_{j}\right)}{\Phi_{j}(\abs{b+\xi_{j}},x_{j})}}\right)= 0.
\end{equation*}

For the latter case where $\Abs{b+\xi_{\infty}}= 0$, there are two possibilities, namely, $b=-\xi_{\infty}$ with $|b|,\abs{\xi_{\infty}}>0$ or $|b|=\abs{\xi_{\infty}}=0$. We are going to justify $F_{\infty}(M)\leq 0$. By contradiction, let us assume that
\begin{equation}\label{contracdiction}
	F_{\infty}(M)>0.
\end{equation}
The ellipticity condition of $F_{\infty}$ implies that matrix $M$ has at least one positive eigenvalue. Let $\rn=T\oplus Q$ be the orthogonal sum, where $T:={\rm span}\{e_{1},e_{2},...,e_{k}\}$ is the invariant space composed of those eigenvectors corresponding to
positive eigenvalues of $M$, and $Q:=\{y\in \rn:\left\langle y,\eta  \right\rangle=0\;{\rm for\;all}\;\eta\in T\}$. 

{\bf Case 1.} $b=-\xi_{\infty}$ with $|b|,\abs{\xi_{\infty}}>0$.
Let $\gamma>0$ and
\begin{equation}\label{model3}
	\varphi_{\gamma}(x):=\varphi(x)+\gamma \Abs{P_{T}(x)}=\frac{1}{2}\left\langle Mx,x\right\rangle+\left\langle b,x\right\rangle+\gamma \Abs{P_{T}(x)},
\end{equation}
where $P_{T}$ stands for the orthogonal projection over $T$. Since $u_{j}\rightarrow u_{\infty}$ locally uniformly in $B_{1}$ and $\varphi$ touches $u_{\infty}$ from below at the origin, then, for $\gamma$ small enough, $\varphi_{\gamma}$ touches
$u_{j}$ from below at a point $x_{j}^{\gamma}$ belonging to a small neighbourhood of 0. Moreover, there holds that, up to a subsequence, $x_{j}^{\gamma}\rightarrow x_{*}$ for some $x_{*}\in B_{3/4}$ as $j\rightarrow\infty$. Now we treat separately the
cases where $P_{T}\left(x_{j}^{\gamma}\right)=0$ and $P_{T}\left(x_{j}^{\gamma}\right)\neq 0$.

First, we consider $P_{T}\left(x_{j}^{\gamma}\right)=0$.
Notice that
\begin{equation*}
	\tilde{\varphi}_{\gamma}(x):=\varphi(x)+\gamma \left\langle e,P_{T}(x)\right\rangle
\end{equation*}
touches $u_{j}$ from below at $x_{j}^{\gamma}$ for every $e\in \mathbb{S}^{d-1}$ (i.e., $|e|=1$). Through a direct calculation, we derive
\begin{equation*}
	D\tilde{\varphi}_{\gamma}(x_{j}^{\gamma})=Mx_{j}^{\gamma}+b+\gamma P_{T}(e),\quad D^{2}\tilde{\varphi}_{\gamma}(x_{j}^{\gamma})=M.
\end{equation*}
We choose $e\in T\cap \mathbb{S}^{d-1}$ such that $P_{T}(e)=e$.  Since $u_{j}$ is a viscosity solution of \eqref{model333}, we get
\begin{equation}\label{model8}
	\Phi_{j}(\abs{Mx_{j}^{\gamma}+b+\gamma e+\xi_{j}},x_{j}^{\gamma})F_{j}(M, x_{j}^{\gamma})+H_{j}\left(Mx_{j}^{\gamma}+b+\gamma e+\xi_{j},x_{j}^{\gamma}\right) \leq f_{j}(x_{j}^{\gamma}).
\end{equation}
If $Mx_{*}=0$, then for $j$ sufficiently large, we have
\begin{equation*}
	\Abs{Mx_{j}^{\gamma}+b+\xi_{j}}\leq \frac{\gamma}{2}.
\end{equation*}
This along with the triangle inequality yields
\begin{equation*}
	\frac{\gamma}{2}\leq \Abs{Mx_{j}^{\gamma}+b+\gamma e+\xi_{j}}\leq  \frac{3\gamma}{2}.
\end{equation*}
In light of (ii)-(iv), we deduce that
\begin{equation*}
		\frac{f_{j}(x_{j}^{\gamma})}{\Phi_{j}(\abs{Mx_{j}^{\gamma}+b+\gamma e+\xi_{j}},x_{j}^{\gamma})}\leq \frac{1}{j\sigma_{1}^{j}(\gamma/2)},
\end{equation*}
\begin{equation*}
	\frac{H_{j}\left(Mx_{j}^{\gamma}+b+\gamma e+\xi_{j},x_{j}^{\gamma}\right)}{\Phi_{j}(\abs{Mx_{j}^{\gamma}+b+\gamma e+\xi_{j}},x_{j}^{\gamma})}\leq \frac{\mathcal{M}_{j}\left(1+\sigma_{1}^{j}(\abs{Mx_{j}^{\gamma}+b+\gamma e+\xi_{j}})\right)}{\sigma_{1}^{j}(\abs{Mx_{j}^{\gamma}+b+\gamma e+\xi_{j}})}\leq\frac{1+\sigma_{1}^{j}(3\gamma/2)}{j\sigma_{1}^{j}(\gamma/2)}.
\end{equation*}
Combining the previous two inequalities with \eqref{model8}, and letting $j\rightarrow \infty$, we conclude $F_{\infty}(M)\leq 0$, which contradicts \eqref{contracdiction}.

On the other hand, if $\Abs{Mx_{*}}>0$, we start off by considering the case in which $T\equiv \rn$ and select $e\in \mathbb{S}^{d-1}$ such that
\begin{equation*}
	\Abs{Mx_{*}+\gamma P_{T}(e)}=\Abs{Mx_{*}+\gamma e}>0.
\end{equation*}
For $j$ large enough, we have
\begin{equation}\label{case1}
	\frac{1}{2}\Abs{Mx_{*}+\gamma e}\leq \Abs{Mx_{j}^{\gamma}+\gamma e}\leq \frac{3}{2}\Abs{Mx_{*}+\gamma e}\quad {\rm and}\quad \abs{\xi_{j}+b}\leq \frac{1}{8}\Abs{Mx_{*}+\gamma e}.
\end{equation}
Furthermore, if $T\neq \rn$, then we choose $e\in \mathbb{S}^{d-1}\cap T^{\perp}$ such that
\begin{equation*}
	\Abs{Mx_{*}+\gamma P_{T}(e)}=\Abs{Mx_{*}}>0.
\end{equation*}
Again for $j$ large enough, we have
\begin{equation}\label{case2}
	\frac{1}{2}\Abs{Mx_{*}}\leq \Abs{Mx_{j}^{\gamma}}\leq \frac{3}{2}\Abs{Mx_{*}}\quad {\rm and}\quad \abs{\xi_{j}+b}\leq \frac{1}{8}\Abs{Mx_{*}}.
\end{equation}
Thus, using either \eqref{case1} or \eqref{case2}, we arrive at
\begin{equation*}
	0<\frac{3}{8}\Abs{Mx_{*}+\gamma P_{T}(e)}\leq \Abs{Mx_{j}^{\gamma}+b+\gamma P_{T}(e)+\xi_{j}}\leq \frac{13}{8}\Abs{Mx_{*}+\gamma P_{T}(e)}.
\end{equation*}
By the same arguments as before, we can also conclude that $F_{\infty}(M)\leq 0$ and we reach again a contradiction to \eqref{contracdiction}.

Next, let us consider $P_{T}\left(x_{j}^{\gamma}\right)\neq 0$. Note that $\Abs{P_{T}(x)}$ is smooth and convex in a small neighbourhood of $x_{j}^{\gamma}$. Because of $P_{T}$ being a projection, then
\begin{equation}\label{feifuding}
\Abs{P_{T}(x_{j}^{\gamma})}D\left(\Abs{P_{T}(x_{j}^{\gamma})}\right)=P_{T}(x_{j}^{\gamma})\quad {\rm and} \quad D^{2}\left(\Abs{P_{T}(x_{j}^{\gamma})}\right)\;\,{\rm is \;nonnegative\; definite}.
\end{equation}
Hence, we arrive at
\begin{equation*}
		\Phi_{j}(\abs{Mx_{j}^{\gamma}+b+\gamma \zeta_{j}^{\gamma}+\xi_{j}},x_{j}^{\gamma})F_{j}(M+\gamma D^{2}\left(\Abs{P_{T}(x_{j}^{\gamma})}\right), x_{j}^{\gamma})
		+H_{j}\left(Mx_{j}^{\gamma}+b+\gamma \zeta_{j}^{\gamma}+\xi_{j},x_{j}^{\gamma}\right) \leq f_{j}(x_{j}^{\gamma}),
\end{equation*}
where 	$\zeta_{j}^{\gamma}:=\frac{{P_{T}(x_{j}^{\gamma})}}{\Abs{P_{T}(x_{j}^{\gamma})}}$.
Observe that $\Abs{\zeta_{j}^{\gamma}}=1$. Set $e:=\zeta_{j}^{\gamma}$, we can perform the same procedure as in the case $P_{T}\left(x_{j}^{\gamma}\right)= 0$ via distinguishing $Mx_{*}=0$ and $Mx_{*}\neq 0$. Then we can conclude that
$$F_{\infty}(M+\gamma D^{2}\left(\Abs{P_{T}(x_{*})}\right))\leq 0.$$
By virtue of \eqref{feifuding} and the ellipticity condition on $F_{\infty}$, we derive $$F_{\infty}(M)\leq F_{\infty}(M+\gamma D^{2}\left(\Abs{P_{T}(x_{*})}\right))\leq0,$$ which contradicts the assumption \eqref{contracdiction}.

{\bf Case 2.} $|b|=|\xi_{\infty}|=0$. In this case, the procedures become easier. Since $\frac{1}{2}\left\langle Mx,x\right\rangle$ touches $u_{\infty}$
from below at the origin and $u_{j}\rightarrow u_{\infty}$ locally uniformly, then the test function
$$\hat{\varphi}(x):=\frac{1}{2}\left\langle Mx,x\right\rangle+\gamma \Abs{P_{T}(x)}$$
touches $u_{j}$ from below at a point $\hat{x}_{j}^{\gamma}\in B_{r}$ for $\gamma>0$ sufficiently small. Likewise, we will analyze two scenarios: $\Abs{P_{T}(\hat{x}_{j}^{\gamma})}=0$ and $\Abs{P_{T}(\hat{x}_{j}^{\gamma})}\neq 0$, which is in a similar manner as above. Eventually, we also conclude that $F_{\infty}(M)\leq 0$. 

As has been stated above, we show
that $u_{\infty}$ is a viscosity solution of \eqref{homojie}. 
It follows from the regularity results in \cite[
Chapter 5]{Caff1} that $u_{\infty}\in C_{\rm loc}^{1,\alpha_{0}}(B_{3/4})$ for some $\alpha_{0}\in(0,1)$ and that $\|u_{\infty}\|_{C^{1,\alpha_{0}}\left(B_{1/2}\right)}\leq C(d,\lambda,\Lambda)$. Finally, taking $h=u_{\infty}$, we reach a contradiction with \eqref{daomaodun} for $j$ sufficiently large and
complete the proof. 
\end{proof}
\subsection{Proof of Theorem \ref{main}}\label{sec3.3}
In this subsection, we present the proof of Theorem \ref{main}. Note that obtaining $C^{1}$ regularity of solutions is showing that the graph of function $u$ can be approximated by an affine function with an error bounded by $C\rho$ in balls of radius $\rho$. To this end, we prove the existence of a sequence of approximating hyperplanes.
  
To begin with, let us consider two moduli of continuity
\begin{equation*}
	\phi(t):=t\sigma_{2}(t)\quad {\rm and}\quad \gamma(t):=\phi^{-1}(t).
\end{equation*}
In what follows, let $h\in C^{1,\alpha_{0}}_{\rm loc}(B_{3/4})$ be a $F$-harmonic function coming from Lemma \ref{lem4.1} and $L>1$ be a constant satisfying
$\|h\|_{C^{1,\alpha_{0}}(B_{1/2})}\leq L$.
Next, we examine the choice of $0<\mu_{1}<1$ by $\gamma(t)$ as follows.

Suppose first $t^{\alpha_{0}}=o(\gamma(t))$, we choose $ 0 < r < \frac{1}{2} $ so small that
\begin{equation}\label{choice1}
	2Lr^{\alpha_{0}}=\gamma(r)=:\mu_{1}>r.
\end{equation}
On the contrary, suppose $O(t^{\alpha_{0}})=\gamma(t)$, we fix $0<\alpha<\alpha_{0}$ and select $0<r<\frac{1}{2}$ so small such that
\begin{equation}\label{choice2}
	2Lr^{\alpha_{0}} = r^{\alpha}=:\mu_{1}>r.
\end{equation}
Observe that once we determine $0<\alpha<\alpha_{0}$, the previous choice becomes universal.

We proceed by defining $0<\vartheta:= \frac{r}{\mu_{1}}<1$ and considering the sequence $\left\{\sigma_{2}^{-1}(\vartheta^{k})\right\}_{k\in\mathbb{N}}$. Since the inverse $ \sigma_{2}^{-1} $ is Dini continuous, we obtain 
that $\left\{\sigma_{2}^{-1}(\vartheta^{k})\right\}_{k\in\mathbb{N}} \in \ell^{1}$. With the choice of $0<\sigma<\frac{1}{4}$ and $\epsilon=\frac{1}{1+\sigma}$, an application of Lemma \ref{lem2.6} yields that there exists a positive sequence $ \{c_{k}\}_{k\in \mathbb{N}}$ that converges to zero and satisfies 
\begin{equation}\label{shoulianguji}
	\frac{7}{10} \sum_{k=1}^{\infty} \sigma_{2}^{-1}(\vartheta^{k}) \leq \sum_{k=1}^{\infty} \frac{\sigma_{2}^{-1}(\vartheta^{k})}{c_{k}}  \leq \sum_{k=1}^{\infty} \sigma_{2}^{-1}(\vartheta^{k})<\infty.
\end{equation}

At this point, we construct a sequence of moduli of continuity $\left\{\Phi^{k}(\cdot,x)\right\}_{k\geq 0}$ given by
\begin{equation}\label{sequence}
	\left\{
	\begin{aligned}
		&  \Phi^{0}(t,x) := \sigma_{1}^{0}(t) + a(x) \sigma_{2}^{0}(t):=\sigma_{1}(t) + a(x) \sigma_{2}(t) ,        \\
		&   \Phi^{1}(t,x) := \sigma_{1}^{1}(t) + a(rx) \sigma_{2}^{1}(t):=\frac{\mu_{1}}{r} \bigg [ \sigma_{1}(\mu_{1}t) + a(rx) \sigma_{2}(\mu_{1}t) \bigg ] ,          \\
		& \Phi^{2}(t,x)  := \sigma_{1}^{2}(t) + a(r^{2}x) \sigma_{2}^{2}(t):=  \frac{\mu_{1}\mu_{2}}{r^{2}} \bigg [ \sigma_{1}(\mu_{1}\mu_{2}t) + a(r^{2}x) \sigma_{2}(\mu_{1}\mu_{2}t) \bigg ]           ,  \\
		 &\qquad\qquad\vdots \\
		&  \Phi^{k}(t,x)  := \sigma_{1}^{k}(t) + a(r^{k}x) \sigma_{2}^{k}(t):= \frac{\mu_{1}\mu_{2}\cdots \mu_{k}}{r^{k}} \bigg [ \sigma_{1}(\mu_{1}\mu_{2}\cdots \mu_{k}t) + a(r^{k}x) \sigma_{2}(\mu_{1}\mu_{2}\cdots \mu_{k}t) \bigg ],
	\end{aligned}
	\right.
\end{equation}
where $\mu_{1}>r$ has been defined as above, and $\left\{\mu_{k}\right\}_{k\geq 2}$ is determined by the following algorithm. If
\begin{equation*}
	\frac{\mu_{k-1}\prod_{i=1}^{k-1}\mu_{i}}{r^{k}}\sigma_{2}\left(\mu_{k-1} \bigg(\prod_{i=1}^{k-1}\mu_{i}\bigg) c_{k}\right) \geq 1,
\end{equation*}
then we set $\mu_{k} = \mu_{k-1}$ and obtain $\sigma_{2}^{k}(c_{k})\geq1$. Otherwise, we choose $\mu_{k}\in(\mu_{k-1},1)$ so that
\begin{equation*}
	\frac{\prod_{i=1}^{k}\mu_{i}}{r^{k}}\sigma_{2} \left(\bigg(\prod_{i=1}^{k}\mu_{i}\bigg) c_{k}\right)  = 1\Longleftrightarrow \sigma_{2}^{k}(c_{k})=1,
\end{equation*}
where $c_{k}$ is the $k$-th element of $\left\{c_{k}\right\}_{k\in\mathbb{N}}$ for which \eqref{shoulianguji} is true.
According to Definition \ref{def2.5}, we know that the sequence of moduli of continuity $\left\{\Phi^{k}(t,x)\right\}_{k\geq 0}$ is shored-up. To proceed, we denote
$$\Gamma:=\big\{\Phi^{0}(t,x), \Phi^{1}(t,x), \cdots, \Phi^{k}(t,x), \cdots\big\}.$$
Thus, with the aid of Lemma \ref{lem2.7}, we conclude that the collection $\Gamma$ is non-collapsing. 

With these ingredients available, we combine them with Lemma \ref{lem4.1} to produce a sequence of affine functions whose difference with respect to $u$ grows in a controlled fashion.
\begin{lemma}\label{4lem1}
	Suppose that the hypotheses \eqref{A1}-\eqref{A6} and \eqref{phiform1} hold. Let $u\in C(B_{1})$ be a viscosity solution of \eqref{4raodongfangcheng} with $\|u\|_{L^{\infty}(B_{1})}\leq 1$.	
	There exists a constant $\delta>0$ coming from Lemma \ref{lem4.1} such that if
	\begin{equation*}
		\max\left\{\|{\rm osc}_{{F}}\|_{L^{\infty}\left(B_{1}\right)},\|f\|_{L^{\infty}(B_{1})},\mathcal{M}\right\}\leq \delta,
	\end{equation*}
	then there exist an affine function 
	$l(x)=\mathcal{A}_{1}+\mathcal{B}_{1}\cdot x$ such that
	\begin{equation}\label{4chushidiedai}
		|\mathcal{A}_{1}|+|\mathcal{B}_{1}|\leq C \quad {\rm and} \quad \sup_{x \in B_{r}} |u(x)-l(x)|\leq \mu_{1}r.
	\end{equation}
where universal constant $0<r<\frac{1}{2}$ is given in \eqref{choice1} and \eqref{choice2}, and $C>0$ is a universal constant.
\end{lemma}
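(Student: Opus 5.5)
We plan to follow the standard first-step argument of tangential analysis. Fix $\varepsilon>0$, to be chosen universally below, and let $\delta=\delta(d,\lambda,\Lambda,\varepsilon,\Gamma)$ be given by Lemma \ref{lem4.1}. Here $\Gamma$ is the non-collapsing family built just above. It contains $\Phi^{0}=\sigma_{1}+a(\cdot)\sigma_{2}$, so the lemma applies to $u$.

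\textbf{Step 1: approximation.} Lemma \ref{lem4.1} produces an $F$-harmonic function $h\in C^{1,\alpha_{0}}_{\rm loc}(B_{3/4})$ with $\|u-h\|_{L^{\infty}(B_{1/2})}\le\varepsilon$ and $\|h\|_{C^{1,\alpha_{0}}(B_{1/2})}\le L$. The bound on $h$ comes from the regularity theory in \cite[Chapter 5]{Caff1}, applied to the limiting profile with $\|h\|_{L^\infty}\le 1$. We define the candidate affine function by
\[
l(x):=h(0)+Dh(0)\cdot x,\qquad \mathcal{A}_{1}:=h(0),\quad \mathcal{B}_{1}:=Dh(0).
\]
Then $|\mathcal{A}_{1}|+|\mathcal{B}_{1}|\le 2L=:C$, which is universal, and this gives the first claim in \eqref{4chushidiedai}.

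\textbf{Step 2: oscillation estimate.} For $x\in B_{r}$ with $r<1/2$, the triangle inequality and the $C^{1,\alpha_0}$ Taylor bound give
\[
|u(x)-l(x)|\le |u(x)-h(x)|+|h(x)-h(0)-Dh(0)\cdot x|\le \varepsilon+Lr^{1+\alpha_{0}}.
\]
By the choice of $r$ in \eqref{choice1} or \eqref{choice2}, we have $2Lr^{\alpha_{0}}=\mu_{1}$ in either case. Hence $Lr^{1+\alpha_0}=\tfrac12\mu_{1}r$. We now fix $\varepsilon:=\tfrac12\mu_{1}r$. Since $r$ and $\mu_1$ depend only on $L$, $\alpha_0$, $\gamma$ and the choice of $\alpha$, this $\varepsilon$ is universal, and therefore so is $\delta$. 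The estimate then becomes $\sup_{B_{r}}|u-l|\le\mu_{1}r$, which is the second claim in \eqref{4chushidiedai}.

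\textbf{Main obstacle.} The only delicate point is to avoid circularity in the order of the choices. The constant $L$ must not depend on $\varepsilon$. This holds because every $h$ arising in Lemma \ref{lem4.1} is a normalized $F_\infty$-harmonic function with $\|h\|_{L^\infty(B_{3/4})}\le 1$ (as a locally uniform limit of the $u_j$). Its $C^{1,\alpha_0}$ bound is therefore $C(d,\lambda,\Lambda)$, uniformly in $\varepsilon$. The choices then proceed in a fixed order: $L$ first, then $r$ and $\mu_1$, then $\vartheta$, $\{c_k\}$ and $\Gamma$, then $\varepsilon$, and finally $\delta$. One should also check that the smallness of $r$ required in \eqref{choice1} and \eqref{choice2} is compatible with $r<1/2$ and $\mu_1>r$. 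This follows from $2Lr^{\alpha_0}>r$ for small $r$, and from $\gamma(r)/r^{\alpha_0}\to\infty$ in the first case.
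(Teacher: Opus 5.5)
Your proposal is correct and follows essentially the same argument as the paper. You take $\mathcal{A}_1=h(0)$ and $\mathcal{B}_1=Dh(0)$ from the $F$-harmonic approximant of Lemma \ref{lem4.1}, bound $\sup_{B_r}|u-l|\le \varepsilon+Lr^{1+\alpha_0}=\varepsilon+\frac12\mu_1 r$ using $2Lr^{\alpha_0}=\mu_1$, and fix $\varepsilon=\frac12\mu_1 r$; your remarks on the order of choices (with $L$ independent of $\varepsilon$ because the limiting profile satisfies $\|h\|_{L^\infty}\le 1$) simply make explicit what the paper leaves implicit.
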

\begin{proof}
	By means of Lemma \ref{lem4.1}, we know that there exists a $F$-harmonic function $h\in C^{1,\alpha_{0}}_{\rm loc}(B_{3/4})$ such that
	\begin{equation}\label{sec3:eq38}
		\sup_{x\in B_{1/2}} |u(x)-h(x)| \leq \varepsilon
	\end{equation}
	for some $\varepsilon>0$ to be determined later. Once $\varepsilon$ is chosen, we may determine the value of $\delta$ in Lemma \ref{lem4.1}. The regularity of $h$ implies that for every $0<r<\frac{1}{2}$, there exists a constant $L>1$ such that
	\begin{equation}\label{sec3:eq39}
		\sup_{x \in B_{r}} |h(x)-h(0)-Dh(0)\cdot x| \leq Lr^{1+\alpha_{0}}
	\end{equation}
	with $|h(0)|+|Dh(0)|\leq L$. Letting $\mathcal{A}_{1}:=h(0)$ and $\mathcal{B}_{1}:=Dh(0)$, combining (\ref{sec3:eq38}) with (\ref{sec3:eq39}) and the choice of $\mu_{1}$, we arrive at 
	\begin{align*}
		\sup_{x \in B_{r}} |u(x)-\mathcal{A}_{1}-\mathcal{B}_{1}\cdot x|  & \leq \sup_{x \in B_{r}} |u(x)-h(x)| + \sup_{x \in B_{r}} |h(x)-\mathcal{A}_{1}-\mathcal{B}_{1}\cdot x|  \\
		& \leq \varepsilon + Lr^{1+\alpha_{0}} = \varepsilon + \frac{1}{2}\mu_{1} r.
	\end{align*}
 Now choosing $\varepsilon=\frac{1}{2}\mu_{1}r$, there holds
	\begin{equation*}
		\sup_{x \in B_{r}} |u(x)-\mathcal{A}_{1}-\mathcal{B}_{1}\cdot x| \leq \mu_{1} r.
	\end{equation*}
	Hence the proof is completed.
\end{proof}
Now we extrapolate the findings in Lemma \ref{4lem1} to arbitrary small scales, in a discrete scheme.
\begin{lemma}\label{4lem2}
	Suppose that the hypotheses \eqref{A1}-\eqref{A6} and \eqref{phiform1} are in force. Let $ u \in C({B}_{1}) $ be a viscosity solution to \eqref{4doublephasezhufangcheng} with $\|u\|_{L^{\infty}(B_{1})}\leq 1$. Assume that
	\begin{equation}\label{4smallcondition}
		\max\left\{\|{\rm osc}_{{F}}\|_{L^{\infty}\left(B_{1}\right)},\|f\|_{L^{\infty}(B_{1})},\mathcal{M}\right\}\leq \delta
	\end{equation}
with $\delta>0$ given in Lemma \ref{4lem1}. Then there exists a sequence of affine functions $ \left\{l_{k}\right\}_{k\in \mathbb{N}}  $ of the form $l_{k}(x)= \mathcal{A}_{k} + \mathcal{B}_{k} \cdot x$ satisfying
	\begin{equation}\label{4diedaijielun}
		\sup_{x \in B_{r^{k}}} |u(x)-l_{k}(x)| \leq \bigg(\prod_{i=1}^{k}\mu_{i} \bigg) r^{k},
	\end{equation}
	\begin{equation}\label{cauchylie}
		|\mathcal{A}_{k+1}-\mathcal{A}_{k}| \leq C \bigg(\prod_{i=1}^{k}\mu_{i} \bigg) r^{k}\quad {\rm and}\quad |\mathcal{B}_{k+1}-\mathcal{B}_{k}| \leq C \prod_{i=1}^{k}\mu_{i}
	\end{equation}
	for every $k\in \mathbb{N}$, where $0<r<\frac{1}{2}$ is given in Lemma \ref{4lem1} and $C>0$ is a universal constant.
\end{lemma}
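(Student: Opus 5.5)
We argue by induction on $k$, following the standard iteration scheme of \cite{Andrade2022}. The case $k=1$ is exactly Lemma \ref{4lem1}, applied with $\xi=0$; set $\mathcal{A}_{0}=\mathcal{B}_{0}=0$ if convenient. Suppose that affine functions $l_{1},\dots,l_{k}$ satisfying \eqref{4diedaijielun} have been constructed. Writing $\Pi_{k}:=\prod_{i=1}^{k}\mu_{i}$, we introduce the renormalized function
\begin{equation*}
	u_{k}(x):=\frac{u(r^{k}x)-l_{k}(r^{k}x)}{\Pi_{k}\,r^{k}},\qquad x\in B_{1}.
\end{equation*}
By the induction hypothesis $\|u_{k}\|_{L^{\infty}(B_{1})}\leq 1$. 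A direct computation gives $Du_{k}(x)=\Pi_{k}^{-1}\bigl(Du(r^{k}x)-\mathcal{B}_{k}\bigr)$ and $D^{2}u_{k}(x)=r^{k}\Pi_{k}^{-1}D^{2}u(r^{k}x)$. Hence $u_{k}$ is a viscosity solution of
\begin{equation*}
	\Phi^{k}\bigl(|Du_{k}+\xi_{k}|,x\bigr)F_{k}(D^{2}u_{k},x)+H_{k}(Du_{k}+\xi_{k},x)=f_{k}(x)\quad\text{in }B_{1},
\end{equation*}
where $\xi_{k}:=\Pi_{k}^{-1}\mathcal{B}_{k}$, $F_{k}(M,x):=\frac{r^{k}}{\Pi_{k}}F\bigl(\frac{\Pi_{k}}{r^{k}}M,r^{k}x\bigr)$, $\Phi^{k}$ is as in \eqref{sequence}, $H_{k}(p,x):=\frac{\Pi_{k}}{r^{k}}H(\Pi_{k}p,r^{k}x)$, and $f_{k}(x):=\frac{\Pi_{k}}{r^{k}}f(r^{k}x)$. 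The prefactor $\Pi_{k}/r^{k}$ built into $\Phi^{k}$ is exactly what absorbs the Hessian scaling once we multiply through by $\Pi_{k}/r^{k}$.

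Next we check that this rescaled problem lies in the class covered by Lemma \ref{lem4.1}, with the same $\delta$. First, $F_{k}$ is $(\lambda,\Lambda)$-elliptic, $F_{k}(0,x)=0$, and ${\rm osc}_{F_{k}}(x)={\rm osc}_{F}(r^{k}x)\leq\delta$. Second, $\Phi^{k}\in\Gamma$, and $\Gamma$ is non-collapsing by the shored-up construction and Lemma \ref{lem2.7}; this is what makes $\delta$ independent of $k$. Third, we need to control the source and the Hamiltonian:
\begin{equation*}
	\|f_{k}\|_{\infty}\leq\frac{\Pi_{k}}{r^{k}}\|f\|_{\infty}\quad\text{and}\quad |H_{k}(p,x)|\leq\frac{\Pi_{k}}{r^{k}}\mathcal{M}\bigl(1+\sigma_{1}(\Pi_{k}|p|)\bigr)=\frac{\Pi_{k}}{r^{k}}\mathcal{M}+\mathcal{M}\,\sigma_{1}^{k}(|p|).
\end{equation*}
The factor $\Pi_{k}/r^{k}\geq(\mu_{1}/r)^{k}$ blows up. This is the main obstacle: the hypotheses of Lemma \ref{lem4.1} are not literally preserved under the rescaling. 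We would handle it by reading the approximation lemma in the normalized form it is actually used in, namely after dividing the equation by $\Pi_{k}/r^{k}$. What enters the proof of Lemma \ref{lem4.1} are the quotients $f_{k}/\Phi^{k}$ and $H_{k}/\Phi^{k}$, and these equal $f(r^{k}x)/\Phi(\Pi_{k}|\cdot|,r^{k}x)$ and $\mathcal{M}(1+\sigma_{1}(\Pi_{k}|p|))/\Phi(\Pi_{k}|p|,r^{k}x)$ respectively. Their smallness at fixed gradient size $\gamma$ then relies on $\Phi^{k}(\gamma,\cdot)\geq\sigma_{1}^{k}(\gamma)\geq\sigma_{2}^{k}(\gamma)$, which is bounded below uniformly in $k$ by the non-collapsing property, together with $\sigma_{2}^{k}(c_{k})\geq1$ coming from the algorithm. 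Similarly, the compactness in Proposition \ref{4aajinxingjieguo2} is used with constants depending on $\inf_{k}\sigma_{1}^{k}(1)\geq1$ for large $k$. We would therefore verify that each step of Lemma \ref{lem4.1} and Proposition \ref{4aajinxingjieguo2} only uses the bound $\|f_{k}\|_{\infty}+\mathcal{M}_{k}\leq\delta\,\Phi^{k}$-type control, and that this bound is inherited.

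With this in hand, Lemma \ref{4lem1} applies to $u_{k}$ and gives $\tilde l(x)=\tilde{\mathcal{A}}+\tilde{\mathcal{B}}\cdot x$ with $|\tilde{\mathcal{A}}|+|\tilde{\mathcal{B}}|\leq C$ and $\sup_{B_{r}}|u_{k}-\tilde l|\leq\mu_{k+1}r$. Since $\mu_{k+1}\geq\mu_{1}$, we choose the Lemma \ref{4lem1} tolerance $\varepsilon=\tfrac12\mu_{1}r$ once and for all, which still suffices. Scaling back, we set
\begin{equation*}
	\mathcal{A}_{k+1}=\mathcal{A}_{k}+\Pi_{k}r^{k}\tilde{\mathcal{A}},\qquad \mathcal{B}_{k+1}=\mathcal{B}_{k}+\Pi_{k}\tilde{\mathcal{B}}.
\end{equation*}
This yields \eqref{4diedaijielun} at level $k+1$ on $B_{r^{k+1}}$, with bound $\Pi_{k}r^{k}\mu_{k+1}r=\Pi_{k+1}r^{k+1}$, and it yields \eqref{cauchylie} directly from $|\tilde{\mathcal{A}}|+|\tilde{\mathcal{B}}|\leq C$.
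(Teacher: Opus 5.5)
The overall structure matches the paper's proof. Your one-step renormalization $u_k(x)=\bigl(u(r^kx)-l_k(r^kx)\bigr)/(\Pi_k r^k)$ coincides with the paper's recursive construction from $u_{k-1}$, and your updates for $\mathcal{A}_{k+1},\mathcal{B}_{k+1}$ are the paper's. The gap is in the rescaled equation. Substitute $Du(r^kx)=\Pi_k(Du_k+\xi_k)$ and $D^2u(r^kx)=\frac{\Pi_k}{r^k}D^2u_k(x)$ into $\Phi F+H=f$. The factor $\Pi_k/r^k$ from the Hessian appears only in the $F$-term, where it is absorbed into $\Phi^k$; the Hamiltonian and the source pick up no factor. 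The correct data are
\[
H_k(p,x)=H(\Pi_k p,r^kx),\qquad f_k(x)=f(r^kx).
\]
With your $H_k,f_k$, the term $\Phi^k F_k(D^2u_k,x)$ equals the original $\Phi(\abs{Du(r^kx)},r^kx)F(D^2u(r^kx),r^kx)$. Meanwhile your $H_k,f_k$ are the original terms multiplied by $\Pi_k/r^k$, so $u_k$ does not solve the equation you wrote. Multiplying through by $\Pi_k/r^k$ does not rescue this: it would put $(\Pi_k/r^k)^2$ in front of $\Phi$.

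This is not cosmetic. The ``main obstacle'' you identify comes from this error, and your remedy would fail for the equation you wrote. The quotients you list have $\Phi(\Pi_k\gamma,r^kx)$ in the denominator, not $\Phi^k(\gamma,x)$. These denominators degenerate as $\Pi_k\to0$, which is shown in the proof of Theorem \ref{main}; for instance, the $H$-quotient is at least $\mathcal{M}/\Phi(\Pi_k\gamma,r^kx)$. The uniform lower bound $\inf_k\Phi^k(\gamma,\cdot)>0$ from non-collapsing says nothing about them. Equivalently, dividing by $\Pi_k/r^k$ replaces $\Phi^k$ by the family $\Phi(\Pi_k\,\cdot\,,r^k\cdot)$, which collapses, so Lemma \ref{lem4.1} gives no $\delta$ uniform in $k$. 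Also, ``we would verify that each step only uses \dots'' is a promissory note, not an argument.

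With the correct scaling no reinterpretation is needed. We have ${\rm osc}_{F_k}(x)={\rm osc}_F(r^kx)\le\delta$ and $\|f_k\|_{L^\infty(B_1)}\le\|f\|_{L^\infty(B_1)}\le\delta$. Since $\Pi_k/r^k\ge(\mu_1/r)^k\ge1$,
\[
\abs{H_k(p,x)}\le\mathcal{M}\bigl(1+\sigma_1(\Pi_k\abs{p})\bigr)\le\mathcal{M}\Bigl(1+\frac{\Pi_k}{r^k}\sigma_1(\Pi_k\abs{p})\Bigr)=\mathcal{M}\bigl(1+\sigma_1^k(\abs{p})\bigr).
\]
So $u_k$ satisfies the hypotheses of Lemma \ref{4lem1} literally, with the same $\delta$ and $\mathcal{M}$ and with $\Phi^k\in\Gamma$. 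This is exactly how the paper closes the induction.
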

\begin{proof}
We resort to an induction argument. Notice that \eqref{4diedaijielun} holds for $k=1$ with $l_{1}(x):=l(x)$ by Lemma \ref{4lem1}. Set $\overline{l}_{0}(x):=\overline{\mathcal{A}}_{0}+\overline{\mathcal{B}}_{0}\cdot x=l_{1}(x)$. To begin with, let us define the auxiliary function $u_{1}:B_{1}\rightarrow \mathbb{R}$ by
	\begin{equation*}
		u_{1}(x):= \frac{u(rx)-\overline{l}_{0}(rx)}{\mu_{1}r}
	\end{equation*}
	with $\mu_{1}$ as in Lemma \ref{4lem1}. It is easy to verify that $u_{1}$ is a viscosity solution to
	\begin{equation*}
	\Phi^{1}\bigg(\Abs{Du_{1}+\frac{\overline{\mathcal{B}}_{0}}{\mu_{1}}},x\bigg)F_{1}(D^2 u_{1}, x)+H_{1}\left(Du_{1}+\frac{\overline{\mathcal{B}}_{0}}{\mu_{1}},x\right)=f_{1}(x)\quad  \text{in} \quad B_{1},
	\end{equation*}
	where $\Phi^{1}$ is defined in \eqref{sequence},
	\begin{align*}
		 F_{1}(X,x):=\frac{r}{\mu_{1}}F\left(\frac{\mu_{1}}{r}X,rx\right),\quad H_{1}(t,x):=H(\mu_{1}t,rx),\quad f_{1}(x):=f(rx).
	\end{align*}
	Note that $F_{1}$ is still a uniformly $(\lambda,\Lambda)$-elliptic operator and
	\begin{equation*}
		{\rm osc}_{{F}_{1}}(x,0)=\sup\limits_{M\in S^{d}\setminus \{0\}}\frac{\Abs{F\left(\frac{\mu_{1}}{r}M,rx\right)-F\left(\frac{\mu_{1}}{r}M,0\right)}}{\frac{\mu_{1}}{r}\|M\|}={\rm osc}_{{F}}(r x,0).
	\end{equation*}
It follows from \eqref{4smallcondition} and $r<\frac{1}{2}$ that
	\begin{equation*}
	\max\left\{\|{\rm osc}_{{F_{1}}}\|_{L^{\infty}\left(B_{1}\right)},\|f_{1}\|_{L^{\infty}(B_{1})}\right\}\leq \delta.
	\end{equation*} 
Also, it follows from \eqref{4diedaijielun} for $k=1$ that $\|u_{1}\|_{L^{\infty}\left(B_{1}\right)}\leq 1$. By the argument before Lemma \ref{4lem1}, we have $\sigma_{2}^{1}(1)\geq 1$. By virtue of \eqref{A5} and $\frac{r}{\mu_{1}}<1$, we arrive at
	\begin{equation*}
		 \abs{H_{1}(t,x)}\leq \mathcal{M}\left(1+\sigma_{1}(\mu_{1}|t|)\right)\leq \mathcal{M}\left(1+\frac{\mu_{1}}{r}\sigma_{1}(\mu_{1}|t|)\right)=\mathcal{M}\left(1+\sigma_{1}^{1}(|t|)\right).
	\end{equation*}
At this point, we have verified that $u_{1}$ falls into the framework of Lemma \ref{4lem1}. Hence, there exists an affine function $\overline{l}_{1}(x)=\overline{\mathcal{A}}_{1}+\overline{\mathcal{B}}_{1}\cdot x$ with $|\overline{\mathcal{A}}_{1}|+|\overline{\mathcal{B}}_{1}|\leq C$ such that
	\begin{equation*}
		\sup_{x \in B_{r}} |u_{1}(x)-\overline{l}_{1}(x)| \leq \mu_{1}r.
	\end{equation*}
	
	Next, we define $u_{2}:B_{1}\rightarrow \mathbb{R}$ as
	\begin{equation*}
		u_{2}(x):= \frac{u_{1}(rx)-\overline{l}_{1}(rx)}{\mu_{2}r},
	\end{equation*}
	for $r<\mu_{1}\leq \mu_{2}$ chosen earlier. Then $u_{2}$ is a viscosity solution to
	\begin{equation*}
	\Phi^{2}\bigg(\Abs{Du_{2}+\frac{\overline{\mathcal{B}}_{1}}{\mu_{2}}+\frac{\overline{\mathcal{B}}_{0}}{\mu_{1}\mu_{2}}},x\bigg)F_{2}(D^2 u_{2}, x)+H_{2}\left(Du_{2}+\frac{\overline{\mathcal{B}}_{1}}{\mu_{2}}+\frac{\overline{\mathcal{B}}_{0}}{\mu_{1}\mu_{2}},x\right)=f_{2}(x)\quad  \text{in} \quad B_{1},
\end{equation*}
	where $\Phi^{2}$ is given in \eqref{sequence},
		\begin{align*}
		 &F_{2}(X,x):=\frac{r}{\mu_{2}}F_{1}\left(\frac{\mu_{2}}{r}X,rx\right)=\frac{r^{2}}{\mu_{1}\mu_{2}}F\left(\frac{\mu_{1}\mu_{2}}{r^{2}}X,r^{2}x\right),\\
		&H_{2}(t,x):=H_{1}(\mu_{2}t,rx)=H(\mu_{1}\mu_{2}t,r^{2}x),\quad f_{2}(x):=f(r^{2}x).
	\end{align*}
	Note that $F_{2}$ is still a uniformly $(\lambda,\Lambda)$-elliptic operator and
	\begin{equation*}
		\|u_{2}\|_{L^{\infty}\left(B_{1}\right)}\leq 1,\quad 
		\max\left\{\|{\rm osc}_{{F_{2}}}\|_{L^{\infty}\left(B_{1}\right)},\|f_{2}\|_{L^{\infty}(B_{1})}\right\}\leq \delta.
	\end{equation*} 
 In view of \eqref{A5} and $r<\mu_{1}\leq \mu_{2}$, we have
	\begin{equation*}
		\abs{H_{2}(t,x)}\leq \mathcal{M}\left(1+\sigma_{1}(\mu_{1}\mu_{2}|t|)\right)\leq \mathcal{M}\left(1+\frac{\mu_{1}\mu_{2}}{r^{2}}\sigma_{1}(\mu_{1}\mu_{2}|t|)\right)=\mathcal{M}\left(1+\sigma_{1}^{2}(|t|)\right).
	\end{equation*}
	At this moment, $u_{2}$ falls into the framework of Lemma \ref{4lem1}, and hence there exists an affine function $\overline{l}_{2}(x)=\overline{\mathcal{A}}_{2}+\overline{\mathcal{B}}_{2}\cdot x$ with $|\overline{\mathcal{A}}_{2}|+|\overline{\mathcal{B}}_{2}|\leq C$ such that
	\begin{equation*}
		\sup_{x \in B_{r}} |u_{2}(x)-\overline{l}_{2}(x)| \leq \mu_{1}r.
	\end{equation*}
	
	Recursively, let us define
		\begin{equation*}
		u_{k}(x):= \frac{u_{k-1}(rx)-\overline{l}_{k-1}(rx)}{\mu_{k}r},
	\end{equation*}
	for $r<\mu_{1}\leq \mu_{2}\leq \cdots\leq \mu_{k-1}\leq \mu_{k}$ chosen earlier. Then $u_{k}$ is a viscosity solution to
	\begin{equation*}
		\Phi^{k}\bigg(\Abs{Du_{k}+\sum_{i=1}^{k}\frac{\overline{\mathcal{B}}_{i-1}}{\mu_{i}\mu_{i+1}\cdots\mu_{k}}},x\bigg)F_{k}(D^2 u_{k}, x)+H_{k}\left(Du_{k}+\sum_{i=1}^{k}\frac{\overline{\mathcal{B}}_{i-1}}{\mu_{i}\mu_{i+1}\cdots\mu_{k}},x\right)=f_{k}(x)\;\; \text{in}\; B_{1},
	\end{equation*}
	where $\Phi^{k}$ is given in \eqref{sequence},
	\begin{align*}
	& F_{k}(X,x):=\frac{r^{k}}{\mu_{1}\mu_{2}\cdots \mu_{k}}F\left(\frac{\mu_{1}\mu_{2}\cdots \mu_{k}}{r^{k}}X,r^{k}x\right),\\
	&H_{k}(t,x):=H_{k-1}(\mu_{k}t,rx)=\cdots=H\left(\bigg( \prod_{i=1}^{k} \mu_{i}\bigg)t,r^{k}x\right),\quad f_{k}(x):=f(r^{k}x).
\end{align*}
	Note that $F_{k}$ is still a uniformly $(\lambda,\Lambda)$-elliptic operator and
	\begin{equation*}
		\|u_{k}\|_{L^{\infty}\left(B_{1}\right)}\leq 1,\quad 
		\max\left\{\|{\rm osc}_{{F_{k}}}\|_{L^{\infty}\left(B_{1}\right)},\|f_{k}\|_{L^{\infty}(B_{1})}\right\}\leq \delta.
	\end{equation*} 
	By means of \eqref{A5} and  $r^{-k}\prod_{i=1}^{k} \mu_{i}\geq 1$, we have
	\begin{equation*}
		\abs{H_{k}(t,x)}\leq \mathcal{M}\left(1+\sigma_{1}\left(|t| \prod_{i=1}^{k} \mu_{i}\right)\right)\leq \mathcal{M}\left(1+\left(r^{-k}\prod_{i=1}^{k} \mu_{i}\right)\sigma_{1}\bigg( |t|\prod_{i=1}^{k} \mu_{i}\bigg)\right)=\mathcal{M}\left(1+\sigma_{1}^{k}(|t|)\right).
	\end{equation*}
	At this stage, $u_{k}$ falls into the framework of Lemma \ref{4lem1}, and thus there exists an affine function $\overline{l}_{k}(x)=\overline{\mathcal{A}}_{k}+\overline{\mathcal{B}}_{k}\cot x$ with $|\overline{\mathcal{A}}_{k}|+|\overline{\mathcal{B}}_{k}|\leq C$ such that
	\begin{equation*}
		\sup_{x \in B_{r}} |u_{k}(x)-\overline{l}_{k}(x)| \leq \mu_{1}r.
	\end{equation*}
	Scaling back to $ u $ yields that
	\begin{equation*}
		\sup_{x \in B_{r^{k+1}}} |u(x)-{l}_{k+1}(x)| \leq \mu_{1}^{2} \mu_{2} \cdots \mu_{k} r^{k+1} \leq \bigg( \prod_{i=1}^{k+1} \mu_{i}   \bigg)r^{k+1},
	\end{equation*}
	where 
	\begin{equation*}
		l_{k+1}(x) := \overline{l}_{0}(x) + \sum_{i=1}^{k}\overline{l}_{i}(r^{-i}x)\bigg( \prod_{j=1}^{i} \mu_{j}\bigg)r^{i}:=\mathcal{A}_{k+1}+\mathcal{B}_{k+1}\cdot x
	\end{equation*}
with
	\begin{equation*}
		\mathcal{A}_{k+1}:= \overline{\mathcal{A}}_{0}+\sum_{i=1}^{k}\bigg(\prod_{j=1}^{i} \mu_{j}\bigg)r^{i}\overline{\mathcal{A}}_{i},\quad \mathcal{B}_{k+1}:= \overline{\mathcal{B}}_{0}+\sum_{i=1}^{k}\bigg(\prod_{j=1}^{i} \mu_{j}\bigg)\overline{\mathcal{B}}_{i}.
	\end{equation*}
	As a consequence, for every $k\in\mathbb{N}$, we get
	\begin{equation*}
		|\mathcal{A}_{k+1}-\mathcal{A}_{k}| = \bigg| \bigg(\prod_{i=1}^{k} \mu_{i}\bigg)r^{k}\overline{\mathcal{A}}_{k}\bigg| \leq C \bigg(\prod_{i=1}^{k} \mu_{i} \bigg)r^{k}
	\end{equation*}
	and
	\begin{equation*}
		|\mathcal{B}_{k+1}-\mathcal{B}_{k}| = \bigg|\bigg( \prod_{i=1}^{k} \mu_{i}\bigg)\overline{\mathcal{B}}_{k}   \bigg| \leq C \bigg(\prod_{i=1}^{k} \mu_{i}    \bigg).
	\end{equation*}
	This completes the proof.
\end{proof}

In the end, with the help of Lemma \ref{4lem2}, we are ready to present the proof of Theorem \ref{main}.
\begin{proof}[Proof of Theorem \ref{main}] To prove Theorem \ref{main}, we first reduce the problem to a smallness regime in Lemma \ref{4lem2}, namely, 
\begin{equation*}
	\|{u}\|_{L^{\infty}\left(B_{1}\right)}\leq 1\quad {\rm and }\quad
	\max\left\{\|{\rm osc}_{{F}}\|_{L^{\infty}\left(B_{1}\right)},\|{f}\|_{L^{\infty}\left(B_{1}\right)},{\mathcal{M}}\right\}
	\leq \delta.
\end{equation*}	 In fact, we define $\tilde{u}:B_{1}\rightarrow \mathbb{R}$ by
$$\tilde{u}(x)=\frac{u(\iota x)}{K}$$
with $0<\iota\leq 1\leq K$ to be chosen later. We can readily check that
$\tilde{u}$ is a viscosity solution of
	\begin{equation}\label{model}
		\tilde{\Phi}(|D\tilde{u}|,x)\tilde{F}(D^2 \tilde{u}, x)+\tilde{H}(D\tilde{u},x) =\tilde{f}(x) \quad  \text{in} \quad B_{1},
	\end{equation}
where 
\begin{equation*}
	\begin{split}
		\tilde{\Phi}(t,x):=&\tilde{\sigma}_{1}(t)+\tilde{a}(x)\tilde{\sigma}_{2}(t), \quad
		\tilde{\sigma}_{i}(t):=\sigma_{i}\left(\frac{K}{\iota}t\right),\;i=1,2,\quad \tilde{a}(x):=a(\iota x),\\
		\tilde{F}(X,x):=&\frac{\iota^{2}}{K}F\left(\frac{K}{\iota^{2}}X,\iota x\right),\quad
		\tilde{H}(t,x):=\frac{\iota^{2}}{k}H\left(\frac{K}{\iota}t,\iota x\right),\quad
		\tilde{f}(x):=\frac{\iota^{2}}{K}f(\iota x).
	\end{split}
\end{equation*}
Note that $\tilde{F}$ is still a uniformly $(\lambda,\Lambda)$-elliptic operator and
\begin{equation*}
	{\rm osc}_{\tilde{F}}(x,0)=\sup\limits_{M\in S^{d}\setminus \{0\}}\frac{\Abs{F\left(\frac{K}{\iota^{2}}M,\iota x\right)-F\left(\frac{K}{\iota^{2}}M,0\right)}}{\frac{K}{\iota^{2}}\|M\|}={\rm osc}_{{F}}(\iota x,0).
\end{equation*}
This along with \eqref{A2} immediately leads to
\begin{equation*}
	\|{\rm osc}_{\tilde{F}}\|_{L^{\infty}\left(B_{1}\right)}= \|{\rm osc}_{{F}}\|_{L^{\infty}\left(B_{\iota}\right)}\leq C_{F}\iota^{\theta}.
\end{equation*}
In addition, observe that
$$\tilde{\sigma}_{i}^{-1}(t):=\frac{\iota}{K}\sigma_{i}^{-1}(t),\;\;i=1,2.$$
In fact, $$\tilde{\sigma}_{i}^{-1}(\tilde{\sigma}_{i}(t))=\tilde{\sigma}_{i}^{-1}\left(\sigma_{i}\left(\frac{K}{\iota}t\right)\right)=\frac{\iota}{K}\sigma_{i}^{-1}\left(\sigma_{i}\left(\frac{K}{\iota}t\right)\right)=t,\quad i=1,2.$$
It follows from \eqref{A3}, \eqref{A4} and $\iota\leq K$ that
\begin{equation*}
	\int_{0}^{1}\frac{\tilde{\sigma}_{i}^{-1}(t)}{t}\text{d}t\leq \int_{0}^{1}\frac{\sigma_{i}^{-1}(t)}{t}\text{d}t<\infty, \quad i=1,2,
\end{equation*}
and $0\leq \tilde{a}(\cdot)\in C(B_{1})$. Also, according to \eqref{A5}, we have 
\begin{equation*}
	\Abs{\tilde{H}(t,x)}\leq \frac{\iota^{2}}{K}\mathcal{M}\left(1+\sigma_{1}\left(\frac{K}{\iota}|t|\right)\right)=:\tilde{\mathcal{M}}\left(1+\tilde{\sigma}_{1}(|t|)\right). 
\end{equation*}
Now, for given $\delta\in(0,1)$, we select
\begin{equation*}
	K:=1+\|u\|_{L^{\infty}\left(B_{1}\right)}+\|f\|_{L^{\infty}\left(B_{1}\right)}+\mathcal{M},\quad
\iota:=\min\left\{1,\sqrt{\delta},\left(\frac{\delta}{C_{F}}\right)^{1/\theta}\right\}.
\end{equation*}
With such choice, we arrive at
\begin{equation*}
	\|\tilde{u}\|_{L^{\infty}\left(B_{1}\right)}\leq 1\quad {\rm and }\quad
	\max\left\{\|{\rm osc}_{\tilde{F}}\|_{L^{\infty}\left(B_{1}\right)},\|\tilde{f}\|_{L^{\infty}\left(B_{1}\right)},\tilde{\mathcal{M}}\right\}
	\leq \delta.
\end{equation*}	
Therefore, $\tilde{u}$ solves an equation possessing the same structure as \eqref{4doublephasezhufangcheng} with $\Phi$ of the form \eqref{phiform1} and $\tilde{u}$ is in the smallness regime.

Next, we examine the convergence of the sequence of $\left\{\prod_{i=1}^{k} \mu_{i}\right\}_{k\in \mathbb{N}}$ given in Lemma \ref{4lem2}. Notice that two possibilities concerning the sequence $\left\{\mu_{k}\right\}_{k\in\mathbb{N}} $ will happen. Either the sequence repeats after some index $ N \geq 2 $ or we have $ \mu_{k} < \mu_{k+1} $ for infinitely many indices $k\in \mathbb{N} $.
	
	In the case of $\mu_{N}=\mu_{N+1}=\mu_{N+2}=\cdots$, since $r<\mu_{1}=2Lr^{\alpha_{0}}\leq \mu_{2}\leq \cdots\leq \mu_{N-1} <\mu_{N}<1$, we can choose $\beta\in (0,\alpha_{0})$ such that $r^{\beta}=\mu_{N}$. Then \eqref{4diedaijielun} and \eqref{cauchylie} immediately become
		\begin{equation*}
		\sup_{x \in B_{r^{k}}} |u(x)-l_{k}(x)| \leq \mu_{N}^{k}r^{k}=r^{k(1+\beta)},
	\end{equation*}
	\begin{equation*}
		|\mathcal{A}_{k+1}-\mathcal{A}_{k}| \leq C  r^{k(1+\beta)}\quad {\rm and}\quad |\mathcal{B}_{k+1}-\mathcal{B}_{k}| \leq C r^{k\beta}.
	\end{equation*}
	By following the standard argument (see \cite{Fang,Huo2026}), we can conclude that solution $u\in C^{1,\beta}_{\rm loc}(B_{1})$ for some $\beta\in (0,\alpha_{0})$, where $\alpha_{0}\in (0,1)$ is the exponent associated with the regularity of $F$-harmonic function. 
	
	In the latter case, there holds
	\begin{equation*}
		\frac{\prod_{i=1}^{k+1}\mu_{i}}{r^{k+1}}\sigma_{2} \bigg(\prod_{i=1}^{k+1}\mu_{i} c_{k+1} \bigg) = 1 \Longleftrightarrow \sigma_{2}^{k+1}(c_{k+1})=1.
	\end{equation*}
	This implies that
	\begin{equation}\label{4shouliantiaojian1}
		 \prod_{i=1}^{k+1} \mu_{i}  =  \frac{1}{c_{k+1}} \sigma_{2}^{-1} \bigg(\frac{r^{k+1}}{\prod_{i=1}^{k+1}\mu_{i}}   \bigg) \leq \frac{\sigma_{2}^{-1}(\vartheta^{k+1})}{c_{k+1}},
	\end{equation}
where in the last inequality, we used that $\sigma_{2}$ is an increasing function,
$\vartheta=\frac{r}{\mu_{1}}$, and $\prod_{i=1}^{k+1}\mu_{i}\geq \mu_{1}^{k+1}$.
Now we define $\left\{\gamma_{k}\right\}_{k\in \mathbb{N}}$ as
$$\gamma_{k}:=\prod_{i=1}^{k} \mu_{i}.$$
	A combination of \eqref{4shouliantiaojian1} and \eqref{shoulianguji} shows that $\left\{\gamma_{k}\right\}_{k\in \mathbb{N}} \in \ell^{1}$ and
	$$\sum_{k=1}^{\infty}\gamma_{k}\leq\sum_{k=1}^{\infty}\frac{\sigma_{2}^{-1}(\vartheta^{k})}{c_{k}}
	\leq \sum_{k=1}^{\infty} \sigma_{2}^{-1}(\vartheta^{k})<\infty.$$
	Consequently, we deduce that
	\begin{equation*}
		\lim_{k\rightarrow \infty} \bigg( \prod_{i=1}^{k} \mu_{i} \bigg) =0.
	\end{equation*}
	Thereby, it follows from \eqref{cauchylie} that
	$\left\{\mathcal{A}_{k}\right\}_{k\in \mathbb{N}}$ and 
	$\left\{\mathcal{B}_{k}\right\}_{k\in \mathbb{N}}$ are Cauchy sequences. Then there exist $ \mathcal{A}_{\infty} \in \mathbb{R} $ and $ \mathcal{B}_{\infty} \in \mathbb{R}^{d} $ such that
	\begin{equation*}
		\mathcal{A}_{k} \rightarrow \mathcal{A}_{\infty} \ \ \text{and} \ \ \mathcal{B}_{k} \rightarrow \mathcal{B}_{\infty}
	\end{equation*}
	as $ k \rightarrow \infty $. Moreover, for any $n> k$, a combination of the triangle inequality and \eqref{cauchylie} yields that 
	$$\abs{\mathcal{A}_{k}-\mathcal{A}_{n}}\leq \sum_{j=k}^{n-1}\abs{\mathcal{A}_{j}-\mathcal{A}_{j+1}}\leq C\sum_{j=k}^{n-1} \bigg(\prod_{i=1}^{j} \mu_{i} \bigg)r^{j}\leq C\sum_{j=k}^{n-1} \gamma_{j}r^{k},
	$$
	$$\abs{\mathcal{B}_{k}-\mathcal{B}_{n}}\leq \sum_{j=k}^{n-1}\abs{\mathcal{B}_{j}-\mathcal{B}_{j+1}}\leq C\sum_{j=k}^{n-1} \bigg(\prod_{i=1}^{j} \mu_{i} \bigg)= C\sum_{j=k}^{n-1} \gamma_{j}.
	$$
	Letting $n \rightarrow \infty$, we get
	\begin{equation}\label{Sec3:eq38}
		|\mathcal{A}_{k}-\mathcal{A}_{\infty}| \leq C \bigg( \sum_{j=k}^{\infty} \gamma_{j}\bigg) r^{k} \ \ \text{and} \ \  |\mathcal{B}_{k}-\mathcal{B}_{\infty}| \leq C \bigg( \sum_{j=k}^{\infty} \gamma_{j}  \bigg).
	\end{equation}
	Set $l_{\infty}(x):= \mathcal{A}_{\infty} + \mathcal{B}_{\infty}\cdot x $. For any $0< \rho \leq r$, then there exists $k \in \mathbb{N}$ such that $r^{k+1} < \rho \leq r^{k}$.
	Combining Lemma \ref{4lem2} with \eqref{Sec3:eq38} and the triangle inequality, we arrive at
	\begin{align}\label{Sec3:eq39}
		\begin{split}
			\sup_{x\in B_{\rho}} |u(x)-l_{\infty}(x)| & \leq \sup_{x\in B_{r^{k}}} |u(x)-l_{k}(x)|  + \sup_{x\in B_{r^{k}}}  |l_{k}(x)-l_{\infty}(x)|   \\
			& \leq C \gamma_{k} r^{k}  +  C \bigg( \sum_{i=k}^{\infty} \gamma_{i}\bigg) r^{k}  \\
			& \leq C\bigg( \sum_{i=k}^{\infty} \gamma_{i}\bigg) \rho .
		\end{split}
	\end{align}
	In the end, let us set
	\begin{equation*}
		\omega(t) := \sum_{i=\lfloor\ln t^{-1}\rfloor}^{\infty} \gamma_{i},
	\end{equation*}
	where $ \lfloor\ln t^{-1}\rfloor $ denotes the biggest integer that is less than or equal to $ \ln t^{-1} $. Since $ \left\{\gamma_{i}\right\} \in \ell^{1} $, then $\omega(t)$ indeed is a modulus of continuity. Hence \eqref{Sec3:eq39} becomes
	\begin{equation*}
		\sup_{x\in B_{\rho}} |u(x)-l_{\infty}(x)|  \leq C \omega(\rho) \rho,
	\end{equation*}
	which finishes the proof of Theorem \ref{main}.
\end{proof}

\section{$C^{1}$ regularity to \eqref{4doublephasezhufangcheng} with $\Phi$ of the form \eqref{xintiaojian}}
In this section, we complete the proof of Theorems \ref{main22} and \ref{main333}, which relate
to the $C^{1}$ and $C^{1,\alpha}$ regularity estimates, respectively. We start off by proving local Lipschitz regularity for a viscosity solution of \eqref{4doublephasezhufangcheng} with $\Phi$ of the form \eqref{xintiaojian}.
	\begin{proposition}\label{qiyituihualipjinxing}
	Assume that \eqref{A1}-\eqref{A6} and \eqref{xintiaojian} hold.
	Let $u\in C(B_{1})$ be a viscosity solution to \eqref{4doublephasezhufangcheng} with $\|u\|_{L^{\infty}(B_{1})}\leq 1$.
	Then $u$ is locally Lipschitz
	continuous in $B_{1}$ with the estimate  
	$$[u]_{C^{0,1}(B_{1/2})}\leq C(d,\lambda,\Lambda,C_{F},\theta,\sigma_{1}(1),\mathcal{M},\|f\|_{L^{\infty}(B_{1})}).$$
\end{proposition}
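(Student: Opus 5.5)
I would reuse the Ishii--Lions doubling-of-variables scheme of Proposition \ref{4aajinxingjieguo2}(i), now with $\xi=0$ and $\Phi(t,x)=\frac{\sigma_{1}(t)+a(x)\sigma_{2}(t)}{t}$. Fix $0<r<1$ and, for $x_{0}\in B_{r/2}$, set
\[
\mathcal{G}(x_{0}):=\sup_{(x,y)\in B_{r}\times B_{r}}\Big\{u(x)-u(y)-L_{1}w(|x-y|)-L_{2}\big(|x-x_{0}|^{2}+|y-x_{0}|^{2}\big)\Big\},
\]
with the same concave profile $w(s)=s-w_{0}s^{1+\beta}$ ($\beta<\theta$). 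The goal is to show $\mathcal{G}\le 0$ for suitable $L_{1}>L_{2}>1$. Arguing by contradiction, let $(\hat x,\hat y)$ be a positive maximum. Choosing $L_{2}\ge 32/r^{2}$ keeps the maximum interior with $|\hat x-\hat y|\le 2/\sqrt{L_{2}}<1$. Crandall--Ishii--Lions then produces jets $(\xi_{\hat x},X)$ and $(\xi_{\hat y},Y)$ satisfying \eqref{matrix2}. The bounds \eqref{puccishangjie} for $P^{+}_{\lambda,\Lambda}(X-Y)$ and \eqref{coeficentshangjie} for $\|Y\|$ carry over verbatim.

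The one genuinely new point is a lower bound on the gradients appearing in $\Phi$, since there is no shift $\xi$ to exploit. From \eqref{4aasubsuperjet}, $0\le w'\le 1$, and $w'(s)\ge 1-w_{0}(1+\beta)s^{\beta}\ge c_{0}>0$ for $s<1$ (by taking $w_{0}$ small, e.g.\ $w_{0}(1+\beta)\le 1/2$), we get
\[
|\xi_{\hat x}|,\ |\xi_{\hat y}|\ \ge\ c_{0}L_{1}-2L_{2}\cdot\tfrac{r}{4}\ \ge\ \tfrac{c_{0}}{2}L_{1}\ge 1
\]
once $L_{1}\ge \max\{L_{2}r/c_{0},\,2/c_{0}\}$. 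Both gradients are also bounded above, by $2L_{1}$.

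Next I bound the quotient $J$ from \eqref{bds}. Using $a\ge 0$ and $\sigma_{1}$ nondecreasing,
\[
\Phi(|\xi_{\hat x}|,\hat x)\ \ge\ \frac{\sigma_{1}(|\xi_{\hat x}|)}{|\xi_{\hat x}|}.
\]
The denominator $|\xi_{\hat x}|\le 2L_{1}$ grows with $L_{1}$, so I would not discard it. Instead I would estimate each term of $J$ through \eqref{A5}:
\[
\Big|\frac{f-H}{\Phi}\Big|\ \le\ \frac{\big(\|f\|_{\infty}+\mathcal{M}(1+\sigma_{1}(|\xi_{\hat x}|))\big)\,|\xi_{\hat x}|}{\sigma_{1}(|\xi_{\hat x}|)}\ \le\ 2L_{1}\Big(\mathcal{M}+\frac{\|f\|_{\infty}+\mathcal{M}}{\sigma_{1}(1)}\Big)=:L_{1}C_{2},
\]
where the last step uses $|\xi_{\hat x}|\ge 1$ and $\sigma_{1}(|\xi_{\hat x}|)\ge\sigma_{1}(1)$; the same holds at $\hat y$. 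Hence $J\ge -2C_{2}L_{1}$. The constant $C_{2}$ depends only on $\mathcal{M}$, $\|f\|_{\infty}$ and $\sigma_{1}(1)$, not on $L_{1}$ or $L_{2}$.

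This linear-in-$L_{1}$ error is the main obstacle, because the Pucci term only gives $-4\lambda L_{1}\beta(1+\beta)w_{0}|\hat x-\hat y|^{\beta-1}$, also linear in $L_{1}$. The way out is the singular factor $|\hat x-\hat y|^{\beta-1}\ge (\sqrt{L_{2}}/2)^{1-\beta}$, which is large when $L_{2}$ is large. Inserting everything into \eqref{bds} and absorbing $C_{F}\|Y\||\hat x-\hat y|^{\theta}$ as in the proof of Proposition \ref{4aajinxingjieguo2} (using $L_{2}$ large, so that $C_{F}|\hat x-\hat y|^{\theta-\beta}\le 2\lambda w_{0}\beta(1+\beta)$) gives
\[
-2C_{2}L_{1}\ \le\ C_{1}(L_{2})-2\lambda w_{0}\beta(1+\beta)L_{1}\Big(\tfrac{\sqrt{L_{2}}}{2}\Big)^{1-\beta}.
\]
The constants are then fixed in order. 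First choose $L_{2}$ so large (depending on $r$, $C_{F}$, $\theta$, $C_{2}$, $\lambda$) that $\lambda w_{0}\beta(1+\beta)(\sqrt{L_{2}}/2)^{1-\beta}\ge 2C_{2}$. Then choose $L_{1}$ large relative to $C_{1}(L_{2})$ and to the previous lower bounds. With these choices the right-hand side drops below $-2C_{2}L_{1}$, which is a contradiction. Therefore $\mathcal{G}\le 0$, and taking $x=x_{0}$ yields $|u(x_{0})-u(y)|\le L_{1}|x_{0}-y|$ near $x_{0}$. Covering $B_{1/2}$ gives the stated estimate.
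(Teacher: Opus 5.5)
Your proposal is correct and follows the paper's proof essentially step by step. It uses the same doubling argument with $\xi=0$, the same two-sided bound $1\le cL_1\le|\xi_{\hat x}|,|\xi_{\hat y}|\le 2L_1$ to stay away from the singularity of $\Phi$, the same estimate $|f-H|/\Phi\le 2L_1\big(\mathcal{M}+(\mathcal{M}+\|f\|_{L^{\infty}(B_{1})})/\sigma_1(1)\big)$, and the same use of $|\hat x-\hat y|\le 2/\sqrt{L_2}$ to let the singular Pucci term dominate this linear-in-$L_1$ error. The only cosmetic difference is in the absorption step. The paper uses $|\hat x-\hat y|^{\theta-1}>1$ to write the error as $L_1|\hat x-\hat y|^{\beta-1}|\hat x-\hat y|^{\theta-\beta}$ and groups it with the $C_F$ term, whereas you invoke $|\hat x-\hat y|^{\beta-1}\ge(\sqrt{L_2}/2)^{1-\beta}$ directly; the mechanism is the same.
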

\begin{proof}
	The proof follows along similar lines as the proof of Proposition \ref{4aajinxingjieguo2}-(i). We only highlight the key differences. Here, we have the following viscosity inequalities
	\begin{equation}\label{jie1}
		\Phi(\abs{\xi_{\hat{x}}},\hat{x})F(X, \hat{x})+
		H({\xi_{\hat{x}}},\hat{x})\geq f(\hat{x}),
	\end{equation}
	\begin{equation}\label{jie2}
		\Phi(\abs{\xi_{\hat{y}}},\hat{y})F(Y, \hat{y})+H({\xi_{\hat{y}}},\hat{y})\leq f(\hat{y}),
	\end{equation}
	where $\xi_{\hat{x}},\xi_{\hat{y}}$ are given in \eqref{4aasubsuperjet}.  
	Let us recall that
	\begin{equation*}
		F(X,\hat{x})\leq  P_{\lambda,\Lambda}^{+}(X-Y)+F(Y,\hat{y})+C_{F}\|Y\|\abs{\hat{x}-\hat{y}}^{\theta},\quad  \|Y\|\leq L_{1}\abs{\hat{x}-\hat{y}}^{-1}+2L_{2}+\epsilon,
	\end{equation*}
	\begin{equation*}
		P_{\lambda,\Lambda}^{+}(X-Y)\leq \left(\lambda+\Lambda(d-1)\right)(4L_{2}+2\epsilon)-4L_{1}\lambda\beta(\beta+1)w_{0}\abs{\hat{x}-\hat{y}}^{\beta-1}.
	\end{equation*}
	 We choose $L_{1}\geq \max \{4,L_{2}\}$ and $L_{2}\geq \left(2^{2+\beta}w_{0}(1+\beta)\right)^{2/\beta}$ so that
	$\abs{\xi_{\hat{x}}}< 2L_{1}$ and
	\begin{align*}
		\abs{\xi_{\hat{x}}}\geq& L_{1}\omega^{\prime}(\abs{\hat{x}-\hat{y}})-\frac{L_{2}}{2}=L_{1}\left(1-w_{0}(1+\beta)\abs{\hat{x}-\hat{y}}^{\beta}\right)-\frac{L_{2}}{2}\\
		\geq&L_{1}\left(1-w_{0}(1+\beta)\left(\frac{2}{\sqrt{L_{2}}}\right)^{\beta}\right)-\frac{L_{2}}{2}\geq \frac{3L_{1}}{4}-\frac{L_{2}}{2}\geq \frac{L_{1}}{4}\geq 1,
	\end{align*}
where the fact $\abs{\hat{x}-\hat{y}}\leq \frac{2}{\sqrt{L_{2}}}$ is used.
	Also, we have $1\leq \frac{L_{1}}{4}\leq \abs{\xi_{\hat{y}}}\leq 2L_{1}$. 
	Based on the above estimates, together with the assumptions \eqref{A3}-\eqref{A5}, we deduce that
	\begin{equation*}
		\frac{f(\hat{y})-H({\xi_{\hat{y}}},\hat{y})}{\Phi\left(\abs{\xi_{\hat{y}}},\hat{y}\right)}\leq |\xi_{\hat{y}}|  \frac{\|f\|_{L^{\infty}(B_{1})}+\mathcal{M}\bigg(1+\sigma_{1}(\abs{\xi_{\hat{y}}})\bigg)}{\sigma_{1}(\abs{\xi_{\hat{y}}})}\leq 
		2L_{1}\bigg( \mathcal{M}+\frac{\mathcal{M}+\|f\|_{L^{\infty}(B_{1})}}{\sigma_{1}(1)}\bigg),
	\end{equation*}
	\begin{equation*}
		\frac{f(\hat{x})-H({\xi_{\hat{x}}},\hat{x})}{\Phi\left(\abs{\xi_{\hat{x}}},\hat{y}\right)}\geq- |\xi_{\hat{x}}|  \frac{\|f\|_{L^{\infty}(B_{1})}+\mathcal{M}\bigg(1+\sigma_{1}(\abs{\xi_{\hat{x}}})\bigg)}{\sigma_{1}(\abs{\xi_{\hat{x}}})}\geq 
		-2L_{1}\bigg( \mathcal{M}+\frac{\mathcal{M}+\|f\|_{L^{\infty}(B_{1})}}{\sigma_{1}(1)}\bigg).
	\end{equation*}
Combining with the above information, we arrive at
\begin{equation*}
	4L_{1}\lambda\beta(\beta+1)w_{0}\abs{\hat{x}-\hat{y}}^{\beta-1}\leq C_{1}+C_{F}L_{1}\Abs{\hat{x}-\hat{y}}^{\theta-1}+4L_{1}\bigg( \mathcal{M}+\frac{\mathcal{M}+\|f\|_{L^{\infty}(B_{1})}}{\sigma_{1}(1)}\bigg),
\end{equation*}
where $C_{1}:=\left(\Lambda(d-1)+\lambda\right)(4L_{2}+2)+C_{F}(2L_{2}+1)$. This along with $\abs{\hat{x}-\hat{y}}^{\theta-1}>1$ yields that
\begin{equation*}
	0\leq C_{1}+4L_{1}\abs{\hat{x}-\hat{y}}^{\beta-1}\bigg(C_{2}
	\Abs{\hat{x}-\hat{y}}^{\theta-\beta}-\lambda\beta(\beta+1)w_{0}\bigg).
\end{equation*}
where $C_{2}:=\frac{C_{F}}{4}+\mathcal{M}+\frac{\mathcal{M}+\|f\|_{L^{\infty}(B_{1})}}{\sigma_{1}(1)} $.
In light of $\abs{\hat{x}-\hat{y}}\leq \frac{2}{\sqrt{L_{2}}}<1$ and $\beta\in (0,\theta)$, we choose $L_{2}\geq 4\left(\frac{4C_{2}}{\lambda w_{0}\beta(1+\beta)}\right)^{2/(\theta-\beta)}$ so that
\begin{equation*}
	C_{2}\Abs{\hat{x}-\hat{y}}^{\theta-\beta}\leq C_{2}\left(\frac{2}{\sqrt{L_{2}}}\right)^{\theta-\beta}\leq \frac{1}{4}\lambda w_{0}\beta(1+\beta).
\end{equation*}
Then it follows that
\begin{equation}\label{qiyimaodun222}
	3L_{1}\lambda w_{0}\beta(1+\beta)< 3L_{1}\lambda w_{0}\beta(1+\beta)\Abs{\hat{x}-\hat{y}}^{\beta-1}\leq C_{1}.
\end{equation}
Finally, taking $L_{1}>\frac{C_{1}}{3\lambda w_{0}\beta(1+\beta)}$, we obtain a contradiction with \eqref{qiyimaodun222}. This completes the proof.
\end{proof}

Building upon the local Lipschitz regularity of solutions above and Theorem \ref{main}, we are now in a position to complete the proof of Theorem \ref{main22}.
\begin{proof}[Proof of Theorem \ref{main22}]
First, we can assume that $\|u\|_{L^{\infty}(B_{1})}\leq 1$ by normalization. Indeed, we take $K:=1+\|u\|_{L^{\infty}\left(B_{1}\right)}$ and consider scaled function $\tilde{u}(x):=\frac{u(x)}{K}$ solving the equation
\begin{equation*}
	\tilde{\Phi}(|D\tilde{u}|,x)\tilde{F}(D^2 \tilde{u}, x)+\tilde{H}(D\tilde{u},x) ={f}(x) \quad  \text{in} \quad B_{1},
\end{equation*}
where 
\begin{equation*}
\begin{split}
	\tilde{F}(X,x):=&K^{-1}F\left(KX, x\right),\quad \tilde{H}(t,x):=
	H\left(Kt, x\right),\\
	\tilde{\Phi}(t,x):=&\frac{\tilde{\sigma}_{1}(t)+{a}(x)\tilde{\sigma}_{2}(t)}{t},\quad  
	\tilde{\sigma}_{i}(t):=\sigma_{i}\left(Kt\right),\;i=1,2.
\end{split}
\end{equation*}
Observe that $\tilde{F}$ is still a uniformly $(\lambda,\Lambda)$-elliptic operator and satisfies \eqref{A2}, and 
$$\tilde{\sigma}_{i}^{-1}(t):=K^{-1}\sigma_{i}^{-1}(t),\;\;i=1,2.$$
It follows from \eqref{A3}, \eqref{A5} and $K\geq 1$ that
\begin{equation*}
	\int_{0}^{1}\frac{\tilde{\sigma}_{i}^{-1}(t)}{t}\text{d}t\leq \int_{0}^{1}\frac{\sigma_{i}^{-1}(t)}{t}\text{d}t<\infty, \quad i=1,2,
\end{equation*}
\begin{equation*}
	\Abs{\tilde{H}(t,x)}\leq \mathcal{M}\left(1+\sigma_{1}\left(K|t|\right)\right)= {\mathcal{M}}\left(1+\tilde{\sigma}_{1}(|t|)\right). 
\end{equation*}
Therefore, $\tilde{u}$ solves an equation possessing the same structure as \eqref{4doublephasezhufangcheng} with $\Phi$ of the form \eqref{xintiaojian} and $\|\tilde{u}\|_{L^{\infty}\left(B_{1}\right)}\leq 1$.

By following the approach of the proof of \cite[Proposition 2.1]{Baasandorj2023AML} (see also \cite[Proposition 1.2]{B-Demengel2016}), we can see that $u$ is a viscosity solution of 
\begin{equation}\label{zhuanhua56563}
	\bigg(\sigma_{1}(|Du|)+a(x)\sigma_{2}(|Du|)\bigg) F(D^2 u, x)=\hat{f}(x) \quad  \text{in} \quad B_{1/2},
\end{equation}
where $
 \hat{f}(x):=|Du|{f}(x)-|Du|{H}(Du,x)$. Moreover, we obtain from Proposition \ref{qiyituihualipjinxing} that $u$ is a Lipschitz continuous function, and as a consequence, the gradient $Du$ is bounded almost everywhere. Thus, we can estimate
\begin{equation*}
	\|\hat{f}\|_{L^{\infty}(B_{1/2})}\leq C \|{f}\|_{L^{\infty}(B_{1/2})}+C\mathcal{M}\bigg(1+\sigma_{1}(C)\bigg).
\end{equation*}
Therefore, we are able to apply Theorem \ref{main} together
with standard covering arguments to get the desired result. 
\end{proof}
Finally, we turn our attention to the proof of Theorem \ref{main333} regarding $C^{1,\alpha}$ regularity.
\begin{proof}[Proof of Theorem \ref{main333}]
As argued before, we can assume $\|u\|_{L^{\infty}(B_{1})}\leq 1$ and see that $u$ is a viscosity solution of the equation
\begin{equation*}
 F(D^2 u, x)=\hat{f}(x) \quad  \text{in} \quad B_{1/2},
\end{equation*}
where $\hat{f}(x):=|Du|\frac{{f}(x)-H(Du,x)}{\sigma_{1}(|Du|)+a(x)\sigma_{2}(|Du|)}$. It follows from \eqref{4aaquyuwuqiongtiaojian} that there exists constant $t_{0}>0$ such that 
\begin{equation}\label{44final}
	\frac{\sigma_{1}(t)}{t},\frac{\sigma_{2}(t)}{t}\geq \frac{\kappa_{0}}{2} \quad {\rm for}\;\; 0<t<t_{0}.
\end{equation}
Applying Proposition \ref{qiyituihualipjinxing}, in combination with \eqref{44final}, \eqref{A3}-\eqref{A5}, we arrive at
$$\|\hat{f}\|_{L^{\infty}({B_{1/2}})}
\leq \max\left\{\frac{2}{\kappa_{0}},\frac{C}{\sigma_{1}(t_{0})}\right\}\left(\|f\|_{L^{\infty}({B_{1/2}})}+\mathcal{M}(1+\sigma_{1}(C))\right).$$
Consequently, by following the standard covering arguments and the classical regularity results (cf. \cite{Caff1,Caffarelli1989}), we conclude that $u\in C^{1,\alpha}_{\rm loc}(B_{1})$ for some $\alpha\in (0,1)$.
\end{proof}
\section*{Acknowledgments}
This work is supported by the National Natural Science Foundation of China (NSFC Grant No.12571103), Young Scientific and Technological Talents (Level Three) in Tianjin and Tianjin Natural Science Foundation Project (No. 25JCQNJC01400).
\section*{Data availability} Data sharing is not applicable to this article as obviously no datasets were generated or analyzed during the current study.
\section*{Conflict of interest} Author states no conflict of interest.

\end{document}